\newtheorem{thm}{Theorem}
\newtheorem{lem}[thm]{Lemma}
\newtheorem{prop}[thm]{Proposition}
\theoremstyle{definition}
\newtheorem{prob}[thm]{Problem}
\newtheorem{conj}[thm]{Conjecture}
\theoremstyle{claim}
\newtheorem*{claim}{Claim}
\theoremstyle{main}
\title{Multithreshold multipartite graphs with small parts}
\author{Teeradej Kittipassorn\thanks{\,Department of Mathematics and Computer Science, Faculty of Science, Chulalongkorn University, Bangkok 10330, Thailand; \texttt{teeradej.k@chula.ac.th}.}
  \and Thanaporn Sumalroj\thanks{\,Department of Mathematics and Computer Science, Faculty of Science, Chulalongkorn University, Bangkok 10330, Thailand; \texttt{t.sumalroj@gmail.com}.}}
\date{}
\begin{document}
	
		\maketitle 
		
\begin{abstract}
A graph is a \emph{$k$-threshold} graph with \emph{thresholds} $\theta_1, \theta_2, \dots, \theta_k$ if we can assign a real number $r_v$ to each vertex $v$ such that for any two distinct vertices $u$ and $v$, 
$uv$ is an edge if and only if the number of thresholds not exceeding $r_u+r_v$ is odd.	
The \emph{threshold number} of a graph is the smallest $k$ for which it is a $k$-threshold graph.
Multithreshold graphs were introduced by Jamison and Sprague as a generalization of classical threshold graphs.
They asked for the exact threshold numbers of complete multipartite graphs. 
Recently, Chen and Hao solved the problem for complete multipartite graphs where each part is not too small, and they asked for the case when each part has size $3$.
We determine the exact threshold numbers of $K_{3, 3, \dots, 3}$, $K_{4, 4, \dots, 4}$ and their complements $nK_3$, $nK_4$.
This improves a result of Puleo.
\end{abstract}	
	
	
\section{Introduction}

A graph $G$ is said to be a \emph{threshold graph} if we can assign a real number $r_v$ to each vertex $v$ and there is a real number $\theta$ such that for any vertex subset $U$ of $G$, $\sum_{v\in U}r_v\leq \theta$  if and only if $U$ is independent in $G$. 
The class of threshold graphs was first introduced by Chv\'{a}tal and Hammer~\cite{C} in 1977 to answer a question in integer linear programming.
As one of the fundamental classes of graphs, properties of threshold graphs have been extensively studied (see~\cite{GJ, HMP, HZ, MP, OJ}), and since then many applications of these graphs have been found in various areas (see~\cite{CMP, G, K, OE, PS}).

Threshold graphs can be characterized in a number of equivalent ways.
For example, $G$ is a threshold graph if and only if $G$ has no induced subgraph isomorphic to $2K_2, P_4$ or $C_4$ (see~\cite{C, MP}). 
Equivalently, a threshold graph is a graph that can be obtained from the single-vertex graph by repeatedly adding an isolated vertex or a universal vertex (see~\cite{C, MP}).  
Moreover, $G$ is a threshold graph if and only if we can assign a real number $r_v$ to each vertex $v$ and there is a real number $\theta$ such that for any two distinct vertices $u$ and $v$, $uv$ is an edge if and only if $r_u+r_v \geq \theta$ (see~\cite{MP}).

Recently, Jamison and Sprague~\cite{JS} introduced \emph{multithreshold graphs} as a generalization of the well-studied threshold graphs as follows.
A graph $G$ is a \emph{$k$-threshold} graph with \emph{thresholds} $\theta_1, \theta_2, \dots, \theta_k$ if we can assign a real number $r_v$, called a \emph{rank}, to each vertex $v$ such that for any two distinct vertices $u$ and $v$, 
$uv$ is an edge if and only if the number of thresholds not exceeding $r_u+r_v$ is odd.
Equivalently,
\begin{align*}
	uv\in E(G) \iff r_u + r_v\in\left[\theta_{2i-1},\theta_{2i}\right) \hspace{0.1cm}\text{for some} \hspace{0.1cm} i\in \left\lbrace 1, 2,\dots, \left\lceil{\frac{k}{2}} \right\rceil\right\rbrace   
\end{align*}
provided $\theta_1<\theta_2<\cdots< \theta_k$ and $\theta_{k+1}=\infty$.
We call such an assignment $r$ of ranks a $\left(\theta_1, \theta_2,\dots, \theta_k \right)$-\emph{representation of $G$}.
By a \emph{rank sum of an edge/nonedge} $uv$, we mean $r_u+r_v$.
Note that the case of one threshold agrees with the classical threshold graphs introduced by Chv\'{a}tal and Hammer~\cite{C}.

Jamison and Sprague~\cite{JS} showed that any graph  of order $n$ is a $k$-threshold graph for some $k\leq \binom{n}{2}$. 
The smallest $k$ for which a graph $G$ is a $k$-threshold graph is said to be the \emph{threshold number} of $G$, denoted by $\Theta(G)$.
Observe that $\Theta(H)\leq\Theta(G)$ for any induced subgraph $H$ of $G$.
In addition, they found a relationship between the threshold numbers of a graph and its complement.
We give a proof for completeness.
\begin{prop}[Jamison and Sprague~\cite{JS}] \label{complement} 
For any graph $G$, either 	
\begin{equation*}
	\displaystyle\Theta(G^c)=\Theta(G) \ \text{or} \  \left\lbrace \Theta(G), \displaystyle\Theta(G^c) \right\rbrace = \left\lbrace 2k, 2k+1 \right\rbrace \  \text{for some} \ k\in\mathbb{N}.
\end{equation*}
\end{prop}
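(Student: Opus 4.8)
The plan is to establish two facts and then combine them with a short parity argument: first the crude bound $|\Theta(G)-\Theta(G^c)|\le 1$, and then the refinement that $\Theta(G^c)\le\Theta(G)$ whenever $\Theta(G)$ is odd.

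For the crude bound, I would start from a $(\theta_1,\dots,\theta_k)$-representation $r$ of $G$ with $k=\Theta(G)$ and choose a real number $\theta_0$ strictly smaller than $\min_{u\ne v}(r_u+r_v)$. With the enlarged threshold list $(\theta_0,\theta_1,\dots,\theta_k)$, the number of thresholds not exceeding $r_u+r_v$ goes up by exactly one for every pair $u\ne v$, so its parity is reversed and $r$ now represents $G^c$; hence $\Theta(G^c)\le\Theta(G)+1$. Applying this to $G^c$ and using $(G^c)^c=G$ gives the reverse inequality, so $|\Theta(G)-\Theta(G^c)|\le 1$.

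For the refinement, suppose $\Theta(G)=2k+1$ and fix a representation $r$ with thresholds $\theta_1<\dots<\theta_{2k+1}$. I would first shift all thresholds down by a common small $\varepsilon>0$: since the set of rank sums is finite, a small enough $\varepsilon$ changes no adjacency, and afterwards no rank sum equals any threshold, which removes all open/closed ambiguity in what follows. Now negate, setting $r'_v=-r_v$ and using the reversed, negated (still increasing) threshold list $\theta'_i=-\theta_{2k+2-i}$. The non-edges of $G$ are exactly the pairs with $r_u+r_v\in(-\infty,\theta_1)\cup[\theta_2,\theta_3)\cup\dots\cup[\theta_{2k},\theta_{2k+1})$, and negating this condition turns it, interval by interval, into the statement that the number of $\theta'_i$ not exceeding $r'_u+r'_v$ is odd (the two descriptions can disagree only at the threshold points, which carry no rank sum). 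Thus $r'$ with thresholds $\theta'_1,\dots,\theta'_{2k+1}$ represents $G^c$, so $\Theta(G^c)\le 2k+1=\Theta(G)$.

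Finally, assume $\Theta(G)\ne\Theta(G^c)$. By the crude bound they differ by exactly $1$, so the smaller one equals $\Theta(X)$ for some $X\in\{G,G^c\}$ and the larger one is $\Theta(X^c)=\Theta(X)+1$. If $\Theta(X)$ were odd, the refinement applied to $X$ would force $\Theta(X^c)\le\Theta(X)$, a contradiction; hence $\Theta(X)$ is even, say $\Theta(X)=2k$, and $\{\Theta(G),\Theta(G^c)\}=\{2k,2k+1\}$. The one step I expect to need care is the negation: keeping track of which interval endpoints are included, which is exactly why I would perform the $\varepsilon$-perturbation first so that rank sums avoid thresholds entirely.
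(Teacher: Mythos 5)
Your proposal is correct and rests on the same key mechanism as the paper's proof: negating ranks and thresholds when the number of thresholds is odd yields the complement, and one extra threshold can always be added harmlessly to adjust parity. The paper merely packages the two steps differently (it pads a representation of $G$ to $2k+1$ thresholds and negates once, obtaining $\Theta(G^c)\le 2k+1$ directly and concluding by symmetry), whereas you separate the crude bound $|\Theta(G)-\Theta(G^c)|\le 1$ from the odd-case refinement; both routes are valid and essentially equivalent.
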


\begin{proof}
Let $k$ and $k'$ be such that $\Theta(G)\in\left\lbrace 2k, 2k+1 \right\rbrace$ and $\displaystyle\Theta(G^c)\in\left\lbrace 2k', 2k'+1 \right\rbrace$.
Take a $\left(\theta_1, \theta_2,\dots, \theta_{\Theta(G)}\right)$-representation $r$ of $G$.	
We may assume that no rank sum equals a threshold by perturbing each threshold to the left.
We may further assume that $r$ has $2k+1$ thresholds by adding a sufficiently large threshold $\theta_{2k+1}$ if neccesary.
We then obtain a $\left(-\theta_{2k+1}, -\theta_{2k},\dots, -\theta_1\right)$-representation of $\displaystyle G^c$ from $r$ by reversing the values of the ranks and the thresholds.
Thus $\displaystyle\Theta(G^c)\leq2k+1$, and so $k'\leq k$.
Similarly, $\Theta(G)\leq2k'+1$, and so $k\leq k'$.
Now, we have $k=k'$, and hence  
$\Theta(G), \displaystyle\Theta(G^c) \in \left\lbrace 2k, 2k+1 \right\rbrace$. 
\end{proof}
This inspired Jamison and Sprague~\cite{JS} to put forward the following conjecture. 
\begin{conj}[Jamison and Sprague~\cite{JS}] \label{conj} 
	For all $k\geq 1$, there is a graph $G$ with $\Theta(G)=2k$ and $\displaystyle\Theta(G^c)=2k+1$. 
\end{conj}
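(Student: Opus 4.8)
The plan is to derive Conjecture~\ref{conj} from exact formulas for the threshold numbers of the four families $nK_3$, $K_{3,3,\dots,3}$ (with $n$ parts of size $3$), $nK_4$ and $K_{4,4,\dots,4}$, noting that $K_{r,r,\dots,r}=(nK_r)^c$. By Proposition~\ref{complement}, for every $n$ the pair $\{\Theta(nK_3),\Theta(K_{3,3,\dots,3})\}$ is either a singleton or a set of consecutive integers $\{2j,2j+1\}$, and likewise for the $K_4$-pair; so it suffices to (i) compute all four threshold numbers, (ii) observe that each complementary pair consists of two \emph{distinct} values for all large $n$, so that the pair is genuinely some $\{2j,2j+1\}$, and (iii) check that the even members of these pairs, as $n$ varies and $r\in\{3,4\}$, run through every positive integer. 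For each $k\ge 1$ one then lets $G$ be whichever graph in such a pair carries the even threshold number $2k$, so that $\Theta(G)=2k$ and $\Theta(G^c)=2k+1$, as required.

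For the \emph{upper bounds} I would write down explicit $(\theta_1,\dots,\theta_k)$-representations. For $nK_r$ the key device is to put the $r$ ranks of each clique into a tiny cluster, so that all $\binom{r}{2}$ of that clique's rank sums collapse to essentially one point $2A_i$; the problem then reduces to choosing the centres $A_1,\dots,A_n$ and the thresholds so that every $2A_i$ lands in an edge-band while every cross sum $A_i+A_j$ with $i\ne j$ lands in a gap. A hierarchical placement of the $A_i$ --- recursively split $\{1,\dots,n\}$ into well-separated blocks --- turns the picture on the line into an alternating sequence of ``loop clumps'' and ``cross clumps'', from which one reads off the number of thresholds needed. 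For $K_{r,\dots,r}$ I would instead use a direct cocktail-party representation (one part's vertices clustered, cross-part sums in a band, within-part sums pushed out to a gap) or simply dualise via Proposition~\ref{complement}; this also makes transparent why $\Theta(K_{r,\dots,r})$ cannot stay bounded as $n\to\infty$, which is what ultimately forces the two members of a complementary pair apart.

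The real work is the matching \emph{lower bound} for $nK_r$. Given an arbitrary $k$-threshold representation, sort the $rn$ ranks; the $\lceil k/2\rceil$ edge-bands, together with the gaps around and between them, partition $\mathbb{R}$, each within-clique pair must land in a band and each cross-clique pair in a gap. One then wants to argue that, measured against this fixed band/gap pattern, a clique can be ``situated'' in only boundedly many genuinely different ways, and that two cliques situated in the same way and lying sufficiently close together must create a cross-clique rank sum inside a band --- a contradiction. Turning this into a sharp bound $n\le F(k)$, hence $k\ge F^{-1}(n)$, that exactly matches the construction is, I expect, the main obstacle: it needs a careful analysis of the relative order of all $rn$ ranks and of which of the $\binom{rn}{2}$ pairwise sums fall in which band or gap, and the combinatorics appears to be genuinely different for $r=3$ and $r=4$ (which is presumably why the two cases are handled separately).

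Finally, with the four exact formulas in hand, verifying (i)--(iii) is routine: the complementary pairs turn out to be of the form $\{2k,2k+1\}$ for all sufficiently large $n$, their even members exhaust the positive integers (with the $K_4$-family available to cover any values the $K_3$-family might miss), and taking for each $k$ the least admissible $n$ produces the desired graph $G$. Because the threshold numbers are pinned down exactly rather than merely bounded, the resulting witnesses are the smallest in these families, which is where the argument improves on Puleo's earlier result.
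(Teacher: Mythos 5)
There is a genuine gap, and also a mismatch with how the paper handles this statement. First, the paper does not reprove Conjecture~\ref{conj} from its main theorems: the conjecture is already confirmed by Theorem~\ref{CH} (Chen and Hao), which for every $n\ge 2$ exhibits $K_{m_1,\dots,m_n}$ with all $m_i\ge n+1$ satisfying $\Theta=2n-2$ and $\Theta$ of the complement $=2n-1$, i.e.\ a witness for every $k=n-1\ge 1$. The paper's Theorems~\ref{Main theorem K3} and~\ref{Main theorem K4} are presented as giving \emph{more} examples. Your plan instead routes the whole conjecture through exact formulas for $\Theta(nK_3)$, $\Theta(K_{n\times3})$, $\Theta(nK_4)$, $\Theta(K_{n\times4})$, but you do not prove those formulas: the lower-bound half is explicitly deferred (``the main obstacle''), and the upper-bound device you describe --- clustering the $r$ ranks of each clique so that all $\binom{r}{2}$ of its rank sums collapse to essentially one point $2A_i$ --- cannot give the sharp bounds. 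With $m$ edge-bands, cliques whose edge rank sums all lie in a single band are limited to at most $m$ in number (this is exactly what Lemmas~\ref{NoTwoSameColor} and~\ref{KeyLowerBound} force), whereas the truth is $n$ up to $m+\binom{m}{3}$ for $nK_3$; the paper achieves this only by giving most triangles \emph{three distinct} edge rank sums $a_i,a_j,a_k$ drawn from a $\mathbb{Q}$-linearly independent set, a construction your clustering idea excludes.

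Second, the concluding step you call ``routine'' is where the deduction actually has content and partially fails as stated. For generic $n$ the complementary pair collapses: e.g.\ $\Theta(nK_3)=\Theta(K_{n\times3})$ for all $n$ with $p_{m-1}<n\le q_m$ outside a narrow window, and the window $p_{m-1}<n<q_m$ contains an integer only when $\binom{m-1}{2}\ge 2$, i.e.\ $m\ge 4$. The cases $k=1,2$ are reached only through the boundary coincidences $n=2=p_0=q_1$ and $n=3=p_1=q_2$ (where the \emph{even} member sits on the $K_{n\times3}$ side), and $k=3$ is reached by neither window of the $K_3$ family and requires $K_{4\times4}$ with $\Theta(K_{4\times4})=6$, $\Theta(4K_4)=7$. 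So the claim that the even members ``run through every positive integer'' is true but needs exactly this case analysis, which your proposal does not supply. The cleanest correct proof of the statement is simply to invoke Theorem~\ref{CH}; if you want to use the paper's own families, you must both prove the four exact formulas (with the correct assignments) and carry out the parity/window bookkeeping above.
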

We denote by $K_{m_1, m_2, \dots, m_n}$ the complete $n$-partite graph with $m_i$ vertices in the $i^{th}$ part for $1\leq i\leq n$, and by
$K_{n\times m}$  the complete $n$-partite graph with $m$ vertices in each part.
Jamison and Sprague~\cite{JS} observed that, by assigning the rank $3^i$ to each vertex of the $i^{th}$ part of $K_{m_1, m_2, \dots, m_n}$, the threshold number is at most $2n$, and they left the following problem.
\begin{prob}[Jamison and Sprague~\cite{JS}] \label{prob}
	Determine the exact threshold number of $K_{m_1, m_2, \dots, m_n}$.
\end{prob} 
Chen and Hao~\cite{CH} recently gave a partial solution of Problem~\ref{prob} which also confirmed Conjecture~\ref{conj}.
\begin{thm}[Chen and Hao~\cite{CH}]\label{CH} 
	Let $m_1, m_2, \dots, m_n$ be $n\geq 2$ positive integers. 
	If $m_i\geq n+1$ for $i=1, 2,\dots, n$, then 
	\begin{equation*}
		\Theta(K_{m_1, m_2, \dots, m_n})=2n-2 \ \ \text{and} \ \ \displaystyle\Theta(K_{m_1, m_2, \dots, m_n}^c)=2n-1.
	\end{equation*}
\end{thm}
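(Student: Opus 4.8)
The plan is to prove the two equalities in turn: first to pin down $\Theta(K_{m_1,\dots,m_n})=2n-2$ by exhibiting a matching construction and lower bound, and then to deduce the value for the complement from Proposition~\ref{complement} together with one more lower bound. Throughout I will call an interval cut out by consecutive thresholds a \emph{non-edge interval} if the number of thresholds it does not exceed is even (every non-edge rank sum lies in such an interval) and an \emph{edge interval} otherwise; with $k$ thresholds there are $\lfloor k/2\rfloor+1$ of the former and $\lceil k/2\rceil$ of the latter.

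For the upper bound $\Theta(K_{m_1,\dots,m_n})\le 2n-2$ --- which in fact needs no hypothesis on the $m_i$ --- I would assign the rank $3^i$ to every vertex of the $i$-th part. Then each within-part rank sum equals $2\cdot 3^i$, while each cross-part sum $3^i+3^j$ with $i<j$ satisfies $2\cdot 3^{j-1}<3^i+3^j<2\cdot 3^j$. Scanning the real line, the rank sums that occur therefore appear as $2\cdot 3^1$, then a block of cross sums inside $(2\cdot 3^1,2\cdot 3^2)$, then $2\cdot 3^2$, then a block inside $(2\cdot 3^2,2\cdot 3^3)$, and so on up to $2\cdot 3^n$: non-edge values and edge blocks strictly alternate, with $n$ non-edge values and $n-1$ edge blocks. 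Taking $\theta_{2i}=2\cdot 3^{i+1}$ and any $\theta_{2i-1}\in(2\cdot 3^i,\,3+3^{i+1})$ for $1\le i\le n-1$ gives $2n-2$ strictly increasing thresholds realising exactly this pattern, hence a valid $(2n-2)$-threshold representation.

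For the lower bound $\Theta(K_{m_1,\dots,m_n})\ge 2n-2$, I would argue by contradiction from a representation with $k\le 2n-3$ thresholds, so with at most $n-1$ non-edge intervals. The key local claim is that each part $i$ supplies a vertex of rank $w_i$ with $2w_i$ in a non-edge interval: if two vertices of part $i$ share a rank $w$, then $2w$ is a non-edge sum and $w_i=w$ works; otherwise $m_i\ge n+1$ yields $n+1$ vertices with ranks $r_1<\cdots<r_{n+1}$, and the $n$ consecutive non-edge sums $r_1+r_2<\cdots<r_n+r_{n+1}$ cannot all lie in distinct non-edge intervals, so two of them, hence two \emph{consecutive} ones $r_a+r_{a+1}$ and $r_{a+1}+r_{a+2}$ (by convexity of an interval), lie in a common non-edge interval, which then also contains $2r_{a+1}$; take $w_i=r_{a+1}$. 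Now $2w_1,\dots,2w_n$ are spread over at most $n-1$ non-edge intervals, so $2w_i$ and $2w_j$ share one for some $i\ne j$; by convexity their midpoint $w_i+w_j$ lies there too, so $w_i+w_j$ is a non-edge sum. But the vertices realising $w_i$ and $w_j$ lie in different parts, hence are adjacent, so $w_i+w_j$ ought to be an edge sum --- a contradiction. With the construction this gives $\Theta(K_{m_1,\dots,m_n})=2n-2$.

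Finally, since $2n-2$ is even, Proposition~\ref{complement} forces $\Theta(K_{m_1,\dots,m_n}^c)\in\{2n-2,2n-1\}$, so it remains only to exclude $2n-2$. Now $K_{m_1,\dots,m_n}^c$ is the disjoint union of the cliques $K_{m_i}$, in which all within-clique rank sums are edge sums; a representation with $k\le 2n-2$ thresholds has at most $n-1$ edge intervals, and the previous argument runs verbatim with "non-edge" and "edge" swapped: each clique (still of size $\ge n+1$) supplies a vertex of rank $w_i$ with $2w_i$ in an edge interval, two cliques share such an interval, the midpoint $w_i+w_j$ is an edge sum, yet the two vertices realising it lie in different cliques and so are non-adjacent --- a contradiction. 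Hence $\Theta(K_{m_1,\dots,m_n}^c)=2n-1$. I expect the delicate part to be the bookkeeping around the interval structure: checking that the non-edge and edge intervals are genuinely convex (so the midpoint trick applies, including to the unbounded first and last intervals) and that $m_i\ge n+1$ is exactly what makes the pigeonhole bite, since with only $n$ vertices in a part one gets merely $n-1$ consecutive sums and the argument collapses.
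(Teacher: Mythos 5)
This theorem is not proved in the paper at all: it is quoted as an external result of Chen and Hao~\cite{CH}, so there is no in-paper proof to compare against. Judged on its own, your argument is correct and complete. The upper bound via ranks $3^i$ works: the within-part sums $2\cdot 3^j$ and the blocks of cross sums in $(2\cdot 3^{j-1},2\cdot 3^j)$ do strictly alternate, your threshold choice $\theta_{2i-1}\in(2\cdot 3^i,3+3^{i+1})$, $\theta_{2i}=2\cdot 3^{i+1}$ is consistent (the interval is nonempty since $3+3^{i+1}-2\cdot 3^i=3+3^i>0$, and the smallest sum in the $i$-th block is exactly $3+3^{i+1}$), and this sharpens the $2n$ bound that Jamison and Sprague observed for the same assignment. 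The lower bound is also sound: with $k\le 2n-3$ thresholds there are indeed $\lfloor k/2\rfloor+1\le n-1$ nonedge intervals, the hypothesis $m_i\ge n+1$ gives the $n$ consecutive within-part sums needed for the pigeonhole, all the intervals (including the two unbounded ones) are convex so the two midpoint steps ($2r_{a+1}$ between consecutive sums, and $w_i+w_j$ between $2w_i$ and $2w_j$) are legitimate, and the final contradiction with adjacency across parts is correct; the dual argument for the disjoint union of cliques with at most $\lceil k/2\rceil\le n-1$ edge intervals goes through verbatim. Your "doubled rank lands in the same interval as a rank sum" device is the same mechanism this paper uses in Lemma~\ref{nocolorK3}, so the proposal fits naturally alongside the paper's toolkit even though the paper itself only cites the result.
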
 
However, their result is far from the truth when $m_i$ are small. For instance, it is readily seen that the threshold number of $K_{n\times 1}$ is $1$.  
Jamison and Sprague~\cite{JS} observed that the threshold number of $K_{n\times 2}$ is at most $3$ by making the nonedge rank sum in each part equal.
Chen and Hao~\cite{CH} mentioned that it would be interesting to know the value of $\Theta(K_{n\times 3})$.
As a tool for answering a question of Jamison, Puleo~\cite{P} proved that $\Theta(nK_3)\geq n^{1/3}$,
which in turn provides a lower bound for $K_{n\times 3}$ by Proposition~\ref{complement}.

In this paper, our main results determine the exact threshold numbers of $K_{n\times 3}$, $K_{n\times 4}$ and their complements $nK_3$, $nK_4$.
The theorems also give more examples satisfying Conjecture~\ref{conj}.
\begin{thm} \label{Main theorem K3}
	Let $p_m=m+\binom{m}{3}+2$ and $q_m=m+\binom{m}{3}+1$.
	\begin{enumerate}[(i)]
		\item For $n\geq 2$, 
			\begin{equation*}
			\Theta(K_{n\times3})=
			\left\{
			\begin{array}{l}
				2m \hspace{.8cm}\text{if} \hspace{.2cm} n=p_{m-1},
				\\
				2m+1 \hspace{.2cm}\text{if} \hspace{.2cm} p_{m-1} < n < p_m.
			\end{array}
			\right.
		\end{equation*}	
		\item For $n\geq 1$,
		\begin{equation*}
			\Theta(nK_3)=
			\left\{
			\begin{array}{l}
				2m-1 \hspace{.2cm}\text{if} \hspace{.2cm} n=q_{m-1},
				\\
				2m \hspace{.8cm}\text{if} \hspace{.2cm} q_{m-1} < n < q_m.
			\end{array}
			\right.
		\end{equation*}	
	\end{enumerate}
\end{thm}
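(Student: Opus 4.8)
The plan is to determine, for each $k$, the largest $n$ with $\Theta(nK_3)\le k$ and the largest $n$ with $\Theta(K_{n\times 3})\le k$, by matching constructions (for the upper bounds) and counting arguments (for the lower bounds). Proposition~\ref{complement} links the two families: reversing ranks turns a $(2j-1)$-threshold representation of either graph into one of the other with $2j-1$ thresholds, but only turns a $2j$-threshold representation into a $(2j+1)$-threshold one, so a handful of even-threshold constructions and lower bounds on the $K_{n\times 3}$ side must be carried out directly. I will phrase the main work on the $nK_3$ side.

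\textbf{Upper bounds.} For $nK_3$ with $n=q_m-1=m+\binom m3$ I will build a $(\theta_1,\dots,\theta_{2m})$-representation: take $m$ bounded edge-windows $E_i=[\theta_{2i-1},\theta_{2i})$ and $m$ rapidly increasing ``scales''; to each strictly increasing triple $i<j<k$ of scales assign one triangle with ranks $x<y<z$ solving $x+y\in E_i$, $x+z\in E_j$, $y+z\in E_k$ (always possible, since $(x,y,z)\mapsto(x{+}y,x{+}z,y{+}z)$ is invertible and the windows can be placed so that the solution sits at the intended scales), so that the scale growth pushes every cross-triangle rank sum strictly between consecutive windows; this realizes $\binom m3$ triangles. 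The remaining $m$ triangles are ``local'', one per window, with all three within-sums in that window. For $n=q_{m-1}$, run the same scheme with $m-1$ scales ($q_{m-1}-1$ triangles, $2m-2$ thresholds) and append one far-right threshold, making the last interval an unbounded edge-window that holds exactly one more triangle with very large ranks. The $K_{n\times 3}$ constructions come by reversing ranks, or, where the parity forbids this (notably $n=p_{m-1}$), by running the dual scheme on the complement: nine within-part pairs into non-edge windows, cross-part pairs into edge-windows.

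\textbf{Lower bounds, and the main obstacle.} Given a $k$-threshold representation of $nK_3$ with edge-windows $E_1<\dots<E_m$ ($m=\lceil k/2\rceil$, with $E_m$ unbounded iff $k$ is odd), attach to each triangle $T$ with sorted ranks $a_T\le b_T\le c_T$ the weakly increasing triple $\tau(T)=(i_1,i_2,i_3)$ defined by $a_T{+}b_T\in E_{i_1}$, $a_T{+}c_T\in E_{i_2}$, $b_T{+}c_T\in E_{i_3}$. The point is that $\tau$ is almost injective, everything being forced by the constraint that for distinct $T,T'$ every one of the nine sums $r_u+r_v$ ($u\in T$, $v\in T'$) avoids every edge-window. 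I would separate three claims: (1) if $k$ is odd, at most one triangle has any within-sum in $E_m$ (two of them would each have a rank $\ge\tfrac12\theta_{2m-1}$, so their cross-sum would lie in $E_m$), which reduces the odd case to the even case plus one; (2) for each $i$, at most one triangle has two within-sums in $E_i$ (the ``degenerate'' triangles), so at most $m$ of them; (3) for each $i<j<k$, at most one triangle has $\tau(T)=(i,j,k)$, so at most $\binom m3$ ``generic'' triangles. This gives $n\le m+\binom m3=q_m-1$ when $k=2m$ and $n\le 1+(m-1)+\binom{m-1}3=q_{m-1}$ when $k=2m-1$, matching the constructions. Claims (2) and (3) are the heart of the matter: one plays the nine cross-sums against one another, using identities such as $(a_T{+}c_{T'})+(c_T{+}a_{T'})=(a_T{+}c_T)+(a_{T'}{+}c_{T'})$ together with the fact that the nine sums form a monotone $3\times 3$ grid, to force some cross-sum into a forbidden window whenever two triangles are ``too close''. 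I expect (2) to be the main difficulty: a purely pairwise argument seems to yield only partial information (for instance it pins the smaller minimum rank of such a pair below $\tfrac12\inf E_i$, but no outright contradiction), so I anticipate needing a global argument, ordering the degenerate triangles by their top within-sum and deriving a contradiction from the interaction of two of them in the same window with each other and with the rest. The lower bounds for $K_{n\times 3}$ follow from the dual counting (the nine cross-sums become nine within-part pairs), with Proposition~\ref{complement} resolving any remaining parity gap.
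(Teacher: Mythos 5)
Your overall architecture matches the paper's: degenerate triangles (one or two colors) contribute at most $m$, generic triangles at most $\binom m3$, the unbounded window contributes at most one extra when $k$ is odd, and matching constructions close the gap. Your claims (1) and (3) are correct and are precisely the paper's Lemmas~\ref{nocolorK3}$(i)$ and~\ref{NoTwoSameColor}. However, your claim (2) --- for each $i$, at most one triangle has two within-sums in $E_i$ --- is false, and it is exactly the step you flag as the heart of the matter. Counterexample: give one triangle the ranks $1,-1,1$ and another the ranks $2,-2,2$, with thresholds $0,\,0.1,\,2,\,2.1,\,4,\,4.1$. The within-sums are $\{0,0,2\}$ and $\{0,0,4\}$, all edges, while the nine cross-sums all lie in $\{-3,-1,1,3\}$ and are all nonedges; this is a valid $6$-threshold representation of $2K_3$ in which \emph{both} triangles have two within-sums in the window $[0,0.1)$. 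So no manipulation of the nine cross-sums can establish your claim (2), and your anticipated ``global argument ordering by top within-sum'' is aimed at a statement that is simply not true. The correct invariant (the paper's Lemma~\ref{KeyLowerBound}) groups the degenerate triangles by the color of the \emph{non-repeated} edge: colors $ijj$ and $i\ell\ell$ cannot coexist, so each color $i$ is the singleton color of at most one degenerate triangle, which still yields the bound of $m$. In the example above the singleton colors are $2$ and $3$, which is why it is permitted. Proving that lemma is a genuine additional idea: one compares $D=a_2-a_1$ (the gap between the two singleton sums) with three linear combinations of the repeated sums, and in each of the three resulting cases a specific cross-sum is forced into a forbidden window.

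A secondary gap: in the upper bound you assert that ``rapidly increasing scales'' push every cross-triangle rank sum out of all edge windows. That is not automatic --- cross-sums have the form $\frac{a_i+a_j-a_k}{2}+\frac{a_r+a_s-a_t}{2}$ and can collide with a window for unluckily chosen scales --- and verifying it is where the real work of the upper bound lies. The paper instead takes the target sums $a_1,\dots,a_m$ linearly independent over $\mathbb{Q}$ and checks (Lemma~\ref{EnotNE}) that such a collision would force a nontrivial rational dependence. Your handling of the odd-threshold case ($n=q_{m-1}$, one unbounded window) and of the dual $n=p_{m-1}$ construction for $K_{n\times3}$ (two extreme windows, each used by exactly one part) otherwise mirrors the paper's argument.
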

\begin{thm} \label{Main theorem K4}
	Let $s_m=m+\binom{\left\lfloor{m/2}\right\rfloor}{3}+\binom{\left\lceil{m/2}\right\rceil}{3}+2$ and $t_m=m+\binom{\left\lfloor{m/2}\right\rfloor}{3}+\binom{\left\lceil{m/2}\right\rceil}{3}+1$. 
	\begin{enumerate}[(i)]
		\item For $n\geq 2$, 
		\begin{equation*}
		\Theta(K_{n\times4})=
		\left\{
		\begin{array}{l}
			2m \hspace{.8cm}\text{if} \hspace{.2cm} n=s_{m-1},
			\\
			2m+1 \hspace{.2cm}\text{if} \hspace{.2cm} s_{m-1} < n < s_m.
		\end{array}
		\right.
	\end{equation*}	
	\item For $n\geq 1$, 
	\begin{equation*}
		\Theta(nK_4)=
		\left\{
		\begin{array}{l}
			2m-1 \hspace{.2cm}\text{if} \hspace{.2cm} n=t_{m-1},
			\\
			2m \hspace{.8cm}\text{if} \hspace{.2cm} 
			t_{m-1}< n < t_m.
		\end{array}
		\right.
	\end{equation*}	
	\end{enumerate}
\end{thm}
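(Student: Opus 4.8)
\emph{Proof plan.}
The plan is to determine, for each $k$, the largest $n$ for which $nK_4$ (and, separately, $K_{n\times 4}$) is a $k$-threshold graph; since $\Theta(H)\le\Theta(G)$ for induced subgraphs, inverting these two extremal functions yields both parts of Theorem~\ref{Main theorem K4}. Write $b_m=\binom{\lfloor m/2\rfloor}{3}+\binom{\lceil m/2\rceil}{3}$, so that $t_m=m+b_m+1$ and $s_m=t_m+1$. By the reversal argument that proves Proposition~\ref{complement}, for odd $\ell$ a graph is an $\ell$-threshold graph if and only if its complement is; hence the odd-threshold data for $nK_4$ and for $K_{n\times 4}$ coincide, and the remaining work is: (U1) a $(2m-1)$-threshold representation of $(t_{m-1})K_4$; (U2) a $(2m)$-threshold representation of $(t_m-1)K_4$; (U3) a $(2m)$-threshold representation of $K_{s_{m-1}\times 4}$; and (L) a single extremal estimate, valid for both $nK_4$ and $K_{n\times 4}$, bounding the number of cliques (respectively, parts) from above in terms of the number of thresholds. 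The methods parallel those for Theorem~\ref{Main theorem K3}.

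For the upper bounds I would place every rank at one of a short list of widely separated ``levels'' forming a rapidly growing geometric progression, refining the $3^i$ assignment of Jamison and Sprague. A $(2m-1)$-threshold representation has $m$ edge zones, $m-1$ bounded and one unbounded; a $(2m)$-threshold representation has $m$ bounded edge zones. The construction uses \emph{clustered} copies of $K_4$, whose four ranks all sit just inside a single edge zone --- so the six intra-copy sums land in that zone, while two copies clustered in distinct zones have all their mutual sums in non-edge zones --- together with \emph{spread} copies, whose four vertices occupy one fixed ``pivot'' level and three further levels taken entirely from one side of the pivot. (For $K_3$ one uses copies with three vertices at three freely chosen levels, which is why $\binom{m}{3}$ appears there; the fourth vertex in $K_4$ forces the pivot, and the one-sidedness is dictated by the requirement that all six sums of a spread copy be edges, which is what replaces $\binom{m}{3}$ by $\binom{\lfloor m/2\rfloor}{3}+\binom{\lceil m/2\rceil}{3}$.) Adding the $m-1$ bounded plus one unbounded zones to the usable spread copies gives the total $t_{m-1}=m+b_{m-1}$, and once the level spacing is fixed, checking that one placement of the thresholds makes every intra-copy sum an edge and every cross-copy sum a non-edge is a finite verification; (U2) and (U3) are variants of this.

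The heart of the paper is the extremal estimate (L). Given a $k$-threshold representation of $nK_4$, perturb so that no rank-sum equals a threshold, and attach to each copy $C_i$ a combinatorial \emph{type}: the multiset of edge zones receiving its six pairwise sums, refined by how the four ranks of $C_i$ distribute among the corresponding rank intervals. The claim is that types cannot essentially repeat. First, at most one copy has any given ``clustered'' type, since two copies clustered in the same edge zone produce a cross-sum in that zone --- an edge --- contradicting the non-adjacency of distinct copies. Second, the ``spread'' types that can actually occur are exactly the $3$-element subsets of one of the two halves into which the level set splits, each occurring at most once, because two copies whose spread types agree, or ``cross'', again force an offending cross-edge. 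Summing over admissible types gives $n\le (m-1)+b_{m-1}+1=t_{m-1}$ in the odd case and $n\le t_m-1$ in the even case; running the same argument for $K_{n\times 4}$ with edges and non-edges interchanged yields the complementary bounds.

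I expect the second half of (L) to be the main obstacle: ruling out \emph{every} way two distinct copies could coincide in, or partially overlap in, a spread type, while correctly handling the unbounded edge zone and the interaction with the clustered copies. This is a careful case analysis on the relative order of up to eight ranks at a time, and the very definition of ``type'' --- in particular the reason the $\lfloor m/2\rfloor$/$\lceil m/2\rceil$ split emerges rather than a full $\binom{m}{3}$ --- must be arranged so that the count comes out exactly $t_{m-1}$ and not a weaker bound. The three constructions, once the level structure above is in place, should be routine.
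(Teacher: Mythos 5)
Your global architecture (extremal function in $k$, complementation to transfer odd-threshold representability, one lower-bound estimate serving both graphs) matches the paper's, but two of your central mechanisms are wrong or missing. On the upper bound: the decisive structural fact is that a $K_4$ can realize prescribed edge rank sums $a_1,b_1,a_2,b_2,a_3,b_3$ (opposite edges paired) if and only if $a_1+b_1=a_2+b_2=a_3+b_3$, i.e.\ the three perfect matchings carry equal totals. This forces the $m$ edge-sum values to be organized into complementary pairs $b_i=N-a_i$, and the count $\binom{\lfloor m/2\rfloor}{3}+\binom{\lceil m/2\rceil}{3}$ then arises as $2\binom{M}{3}$ (each $3$-subset of the $M$ pairs yields two non-isomorphic copies) plus, for odd $m$, extra copies built around a middle value $\tfrac{N}{2}$. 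Your ``widely separated levels in a geometric progression'' cannot work: the six sums of a spread copy satisfy two linear relations among the target zone values, and generic (in particular geometrically growing) levels admit no such relations, so almost no spread copies exist. You also attribute the $\lfloor m/2\rfloor/\lceil m/2\rceil$ split in the construction to a ``one-sidedness'' requirement; that split has nothing to do with the construction --- it belongs to the lower bound --- and misdiagnosing it means your construction does not reach $t_m-1$ copies.

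On the lower bound, your claim that ``the spread types that can actually occur are exactly the $3$-element subsets of one of the two halves'' is false as stated: a $K_4$ can have its six edges colored with colors from both halves. The correct statement is weaker and is the crux of the argument: ordering the four ranks $r_{v_1}\le r_{v_2}\le r_{v_3}\le r_{v_4}$, the six sums are sandwiched around the median edge $v_2v_3$, so the lower triangle $v_1v_2v_3$ has all colors at most that of $v_2v_3$ and the upper triangle $v_2v_3v_4$ has all colors at least it; hence every $K_4$ whose triangles are all $3$-colored \emph{contains} a triangle with colors entirely in the lower or entirely in the upper half. Combining this with the fact that no two triangles anywhere in $nK_4$ share a color triple gives the $\binom{\lfloor m/2\rfloor}{3}+\binom{\lceil m/2\rceil}{3}$ bound, and the remaining $K_4$'s (those containing a $1$- or $2$-colored triangle) are bounded by $m$ via the $ijj$ versus $i\ell\ell$ exclusion. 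Without the median-edge observation your case analysis on ``up to eight ranks'' has no organizing principle and will not close. Finally, the claim that checking edge and nonedge sums never coincide is ``a finite verification'' understates a genuine piece of work: it requires choosing $\{N,a_1,\dots,a_M\}$ linearly independent over $\mathbb{Q}$ and a parity argument on coefficient sums, carried out over all pairs of copy types.
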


The proofs of the lower bounds for the threshold numbers are based on an idea of Puleo~\cite{P}.
We assign a color to each edge of $nK_3$ and $nK_4$ according to the location of its rank sum with respect to the thresholds.
For $nK_3$, we show that colors $ijj$ and $i\ell\ell$ cannot appear on two triangles simultaneously.	
For $nK_4$, we prove that each $K_4$ must contain a particular kind of $K_3$.
These help us find the maximum value of $n$ in terms of the number of colors.

The rest of this paper is organized as follows. In Section~\ref{scetion-K3andKnx3}, we prove Theorem~\ref{Main theorem K3}, and
Section~\ref{scetion-K4andKnx4} is devoted to proving Theorem~\ref{Main theorem K4}.
Finally, we give some concluding
remarks in Section~\ref{section-conclusion}.

\section{Threshold numbers of $K_{n\times3}$ and $nK_3$} \label{scetion-K3andKnx3}

In this section, we determine the values of $\Theta(K_{n\times3})$ and $\Theta(nK_3)$.
To outline the proofs, we will need five lemmas.
Lemmas~\ref{NoTwoSameColor} to~\ref{nocolorK3} are for the lower bounds where the key idea is in Lemma~\ref{KeyLowerBound}.
We apply Lemmas~\ref{NoTwoSameColor} and~\ref{KeyLowerBound} to prove Lemma~\ref{Max.of.n}, which determines the maximum number of triangles and parts in terms of the number of colors.
Lemma~\ref{nocolorK3} helps improve the lower bounds obtained from Lemma~\ref{Max.of.n}.
On the other hand, Lemma~\ref{EnotNE} is a tool to prove the upper bounds.  

We start by assigning a color to each edge of $nK_3$ and each nonedge of $K_{n\times3}$ as follows.
In a $\left(\theta_1, \theta_2,\dots, \theta_k \right)$-representation of $nK_3$ where $\theta_1<\theta_2<\cdots< \theta_k$, we color an edge $uv$ with \emph{color $i$}, for $i\in \left\lbrace 1, 2,\dots, \left\lceil{\frac{k}{2}} \right\rceil\right\rbrace$, if $r_u + r_v\in\left[\theta_{2i-1},\theta_{2i}\right)$.
We say that a triangle has a \emph{color} $i j \ell$ if the colors appearing on its edges are $i, j$ and $\ell$.

Similarly, in a $\left(\theta_1, \theta_2,\dots, \theta_k \right)$-representation of $K_{n\times3}$ where $\theta_1<\theta_2<\cdots< \theta_k$, we color a nonedge $xy$ with \emph{color $i$}, for $i\in \left\lbrace 1, 2,\dots, \left\lceil{\frac{k+1}{2}} \right\rceil\right\rbrace$, if $r_x + r_y\in\left[\theta_{2i-2},\theta_{2i-1}\right)$ where $\theta_0=-\infty$.
We say that a part has a \emph{color} $i j \ell$ if the colors appearing on its nonedges are $i, j$ and $\ell$.

First, we need a result of Puleo~\cite{P} which says that no two triangles in $nK_3$ have the same color.
Interchanging edges and nonedges, no two parts in $K_{n\times3}$ have the same color.
We include a proof for completeness.
\begin{lem}[Puleo~\cite{P}] \label{NoTwoSameColor} 
	\begin{enumerate} [(i)]
		\item In a $\left(\theta_1, \theta_2,\dots, \theta_k \right)$-representation of $nK_3$, no two triangles have the same color.
		\item In a $\left(\theta_1, \theta_2,\dots, \theta_k \right)$-representation of $K_{n\times3}$, no two parts have the same color.
	\end{enumerate}
\end{lem}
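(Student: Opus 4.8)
The plan is to prove (i); part (ii) follows from the same argument after interchanging the words \emph{edge} and \emph{non-edge} (equivalently \emph{odd} and \emph{even}), as I note at the end. Fix a $(\theta_1,\dots,\theta_k)$-representation of $nK_3$ with $\theta_1<\cdots<\theta_k$. Perturbing the thresholds I may assume no rank sum equals a threshold; appending one large threshold (this changes no edge of $nK_3$) I may also assume $\theta_k$ exceeds every rank sum. Then every rank sum lies in one of the intervals $I_0=(-\infty,\theta_1)$ and $I_t=[\theta_t,\theta_{t+1})$ for $1\le t\le k-1$; let $\iota(s)$ be the index of the interval containing $s$, so that $uv\in E(nK_3)$ iff $\iota(r_u+r_v)$ is odd. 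The colour of an edge is an increasing function of the (odd) value of $\iota$ on its rank sum, so the colour of a triangle is just the multiset of $\iota$-values on its three edges. Suppose $T=abc$ and $T'=xyz$ have the same colour, and order vertices so that $r_a\le r_b\le r_c$ and $r_x\le r_y\le r_z$. Then $r_a+r_b\le r_a+r_c\le r_b+r_c$ and $r_x+r_y\le r_x+r_z\le r_y+r_z$, so since $\iota$ is non-decreasing these edges are listed in colour order, and as the two triangles have the same colour we obtain odd numbers $p\le q\le r$ with
\[
\iota(r_a+r_b)=\iota(r_x+r_y)=p,\quad
\iota(r_a+r_c)=\iota(r_x+r_z)=q,\quad
\iota(r_b+r_c)=\iota(r_y+r_z)=r.
\]

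Two observations carry the proof. First, every pair made of a vertex of $T$ and a vertex of $T'$ is a non-edge, so its rank sum has even $\iota$; hence if such a rank sum exceeds $\theta_t$ for an odd $t$, it must in fact be $\ge\theta_{t+1}$, since $[\theta_t,\theta_{t+1})$ is a single interval and its index $t$ is odd. Second, for distinct $u,u'$ and distinct $v,v'$ one has $(r_u+r_v)+(r_{u'}+r_{v'})=(r_u+r_{u'})+(r_v+r_{v'})$, so a sum of two ``cross'' rank sums equals one within-$T$ rank sum plus one within-$T'$ rank sum, whose positions relative to the thresholds we have just recorded.

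Now I would argue as follows. Since $r_a+r_b,r_x+r_y\in[\theta_p,\theta_{p+1})$,
\[
(r_a+r_y)+(r_b+r_x)=(r_a+r_b)+(r_x+r_y)<2\theta_{p+1},
\]
and the two summands on the left are non-edge rank sums, so they cannot both be $\ge\theta_p$ (otherwise each is $\ge\theta_{p+1}$ and the sum is $\ge2\theta_{p+1}$). By the symmetry between $T$ and $T'$ I may assume $r_b+r_x<\theta_p$ (if instead $r_a+r_y<\theta_p$, swap the names $a\leftrightarrow x$, $b\leftrightarrow y$, $c\leftrightarrow z$: this interchanges $T$ and $T'$, keeps the rank orders and the triple $(p,q,r)$, and turns $r_a+r_y$ into $r_b+r_x$). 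Then, using $r_a+r_b\ge\theta_p$ and $r_x+r_z\ge\theta_q$,
\[
(r_a+r_z)+(r_b+r_x)=(r_a+r_b)+(r_x+r_z)\ge\theta_p+\theta_q
\]
forces $r_a+r_z>\theta_q$, hence $r_a+r_z\ge\theta_{q+1}$; likewise, using $r_b+r_c\ge\theta_r$ and $r_x+r_y\ge\theta_p$,
\[
(r_c+r_y)+(r_b+r_x)=(r_b+r_c)+(r_x+r_y)\ge\theta_r+\theta_p
\]
forces $r_c+r_y>\theta_r$, hence $r_c+r_y\ge\theta_{r+1}$. But $r_a+r_c<\theta_{q+1}$ and $r_y+r_z<\theta_{r+1}$, so
\[
(r_a+r_z)+(r_c+r_y)=(r_a+r_c)+(r_y+r_z)<\theta_{q+1}+\theta_{r+1},
\]
contradicting the two lower bounds just obtained. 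This proves (i).

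The step I expect to need the most care is the symmetry reduction ``$r_b+r_x<\theta_p$'': one must check that relabelling $T\leftrightarrow T'$ (matching $i$-th smallest rank to $i$-th smallest rank) genuinely gives another configuration of the same kind, with $(p,q,r)$ unchanged and $r_a+r_y$ playing the role of $r_b+r_x$. The remaining points are routine: the dictionary between the paper's edge-colours and the parity of $\iota$, and the fact that ``same colour'' together with the within-triangle ordering of rank sums forces the three matchings in the first display. For (ii) the same four displayed relations work verbatim after swapping ``odd'' and ``even'': within a part the three rank sums have even $\iota$, so $p\le q\le r$ are even; cross-part rank sums have odd $\iota$; the observation ``$[\theta_t,\theta_{t+1})$ is a single interval of index $t$'' is exactly what is needed, now with $t$ even; and the same chain of inequalities gives a contradiction.
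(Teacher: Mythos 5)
Your proof is correct, but it follows a genuinely different route from the paper's. The paper's argument is a short pigeonhole-plus-sandwich: matching vertices by the colour pattern, at least two of the three positions have their larger rank in the same triangle, say $r_{x_1}\le r_{y_1}$ and $r_{x_2}\le r_{y_2}$; then with $r_{x_p}=\min\{r_{x_1},r_{x_2}\}$ and $r_{y_q}=\max\{r_{y_1},r_{y_2}\}$ one gets $r_{x_1}+r_{x_2}\le r_{x_p}+r_{y_q}\le r_{y_1}+r_{y_2}$, so the single cross pair $x_py_q$ has rank sum in the edge interval $[\theta_{2i-1},\theta_{2i})$ --- an immediate contradiction. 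You instead match vertices by rank order (correctly observing that sorting the rank sums sorts the interval indices, so equal colour multisets force the three displayed identifications), and then run a chain of inequalities based on the identity $(r_u+r_v)+(r_{u'}+r_{v'})=(r_u+r_{u'})+(r_v+r_{v'})$ together with the parity of the interval index of a non-edge rank sum, reaching a contradiction via three cross pairs $bx$, $az$, $cy$. This is closer in spirit to the paper's proof of Lemma~\ref{KeyLowerBound} than to its proof of Lemma~\ref{NoTwoSameColor}. The paper's version buys brevity (one cross pair, four lines); yours buys a uniform odd/even duality that transfers to part $(ii)$ essentially verbatim (where the boundary case $p=0$, i.e.\ $\theta_0=-\infty$, makes your first step terminate immediately), at the cost of the longer inequality chain and the care needed in the symmetry reduction, which you correctly flag and justify.
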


\begin{proof}
	We will only prove $(i)$ as the proof of $(ii)$ is similar.
	Let $r$ be a $\left(\theta_1, \theta_2,\dots, \theta_k \right)$-representation of $nK_3$ where $\theta_1<\theta_2<\cdots< \theta_k$.
	Suppose to the contrary that there are two triangles $T_x$ and $T_y$ in $nK_3$ having the same color $ij\ell$. 
	Thus if $V(T_x)=\{x_1, x_2, x_3\}$ and  $V(T_y)=\{y_1, y_2, y_3\}$, then their edge rank sums are as follows:
	\begin{align*}
		r_{x_1}+r_{x_2}, r_{y_1}+r_{y_2}&\in\left[\theta_{2i-1},\theta_{2i}\right),\\
			r_{x_1}+r_{x_3}, r_{y_1}+r_{y_3}&\in\left[\theta_{2j-1},\theta_{2j}\right), \\  
		r_{x_2}+r_{x_3}, r_{y_2}+r_{y_3}&\in\left[\theta_{2\ell-1},\theta_{2\ell}\right).
	\end{align*}
Note that at least two ranks out of $\max\left\lbrace r_{x_1}, r_{y_1} \right\rbrace, \max\left\lbrace r_{x_2}, r_{y_2} \right\rbrace, \max\left\lbrace r_{x_3}, r_{y_3} \right\rbrace$ are from the same triangle.
Without loss of generality, let $r_{x_1}\leq r_{y_1}$ and $r_{x_2}\leq r_{y_2}$.
Write $r_{x_p}=\min\left\lbrace r_{x_1}, r_{x_2} \right\rbrace$ and $r_{y_q}=\max\left\lbrace r_{y_1}, r_{y_2} \right\rbrace$.
Then $\theta_{2i-1}\leq r_{x_1}+r_{x_2}\leq r_{x_p}+r_{y_q}\leq r_{y_1}+r_{y_2}<\theta_{2i} $.
By the definition of thresholds, $x_py_q$ is an edge of color $i$, which contradicts the fact that $x_py_q$ is a nonedge in $nK_3$. 
\end{proof} 	 
The next lemma is the key idea for obtaining the lower bounds for the threshold numbers.
\begin{lem} \label{KeyLowerBound}
	\begin{enumerate} [(i)]
	\item In a $\left(\theta_1, \theta_2,\dots, \theta_k \right)$-representation of $nK_3$ and colors $i, j, \ell \in \left\lbrace 1, 2,\dots, \left\lceil{\frac{k}{2}} \right\rceil\right\rbrace$, colors $ijj$ and $i\ell\ell$ cannot appear on two triangles simultaneously.	
	\item 	In a $\left(\theta_1, \theta_2,\dots, \theta_k \right)$-representation of $K_{n\times3}$ and colors $i, j, \ell \in \left\lbrace 1, 2,\dots, \left\lceil{\frac{k+1}{2}} \right\rceil\right\rbrace$, colors $ijj$ and $i\ell\ell$ cannot appear on two parts simultaneously.	
\end{enumerate}
\end{lem}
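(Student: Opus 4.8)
The plan is a proof by contradiction in the spirit of Lemma~\ref{NoTwoSameColor}: from a triangle $T_x$ of colour $ijj$ and a triangle $T_y$ of colour $i\ell\ell$ I will manufacture a \emph{cross pair} --- one vertex from each triangle --- whose rank sum is trapped inside some interval $[\theta_{2a-1},\theta_{2a})$, forcing it to be an edge, which is absurd because distinct triangles of $nK_3$ are vertex-disjoint and non-adjacent. Part $(ii)$ will follow by the same argument with the roles of edges and nonedges exchanged (a pair of vertices in two distinct parts of $K_{n\times 3}$ must be an edge, exactly as a pair in two distinct triangles of $nK_3$ must be a nonedge) and with every interval $[\theta_{2a-1},\theta_{2a})$ replaced by the nonedge interval $[\theta_{2a-2},\theta_{2a-1})$ with $\theta_0=-\infty$; so I will write out only $(i)$.

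First I would clear away the degenerate case $j=\ell$, which is literally Lemma~\ref{NoTwoSameColor}$(i)$, and thereafter assume $j\ne\ell$, so that $T_x\ne T_y$. Next I would fix convenient labels: $V(T_x)=\{x_1,x_2,x_3\}$ with $x_1x_2$ of colour $i$ and $x_1x_3,x_2x_3$ of colour $j$, and $V(T_y)=\{y_1,y_2,y_3\}$ with $y_1y_2$ of colour $i$ and $y_1y_3,y_2y_3$ of colour $\ell$; then, using the symmetry $x_1\leftrightarrow x_2$ and $y_1\leftrightarrow y_2$, arrange $r_{x_1}\le r_{x_2}$ and $r_{y_1}\le r_{y_2}$. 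The only edge-colour common to the two triangles is $i$, borne by $x_1x_2$ and $y_1y_2$, and comparing these two pairs is the engine of the proof.

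The elementary observation I would record is: if $\{u,v\}$ and $\{w,z\}$ are edges of the same colour $c$ with $u,v$ in one triangle and $w,z$ in the other, and $r_u\le r_w$ and $r_v\le r_z$, then $\theta_{2c-1}\le r_u+r_v\le r_u+r_z\le r_w+r_z<\theta_{2c}$, so the cross pair $uz$ would be an edge --- a contradiction. Applying this to the colour-$i$ edges $x_1x_2$ and $y_1y_2$ (once as $u=x_1,v=x_2,w=y_1,z=y_2$, once with $x$ and $y$ swapped) settles every case in which the sorted pairs $(r_{x_1},r_{x_2})$ and $(r_{y_1},r_{y_2})$ are comparable coordinatewise. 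A short Boolean check, using that both pairs are sorted, shows that the only remaining possibility is that one pair is \emph{strictly nested} in the other: $r_{x_1}<r_{y_1}\le r_{y_2}<r_{x_2}$ or $r_{y_1}<r_{x_1}\le r_{x_2}<r_{y_2}$.

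This nested case --- which has no counterpart in Lemma~\ref{NoTwoSameColor}, where all three colours agreed and a pigeonhole over the three maxima sufficed --- is where the repeated colour inside a triangle earns its keep, and I expect it to be the only real step. Suppose $r_{x_1}<r_{y_1}\le r_{y_2}<r_{x_2}$. Both $r_{x_1}+r_{x_3}$ and $r_{x_2}+r_{x_3}$ lie in $[\theta_{2j-1},\theta_{2j})$ because $x_1x_3$ and $x_2x_3$ have colour $j$, and since $r_{x_1}<r_{y_2}<r_{x_2}$ the value $r_{y_2}+r_{x_3}$ lies strictly between them, hence also in $[\theta_{2j-1},\theta_{2j})$; thus the cross pair $x_3y_2$ is an edge of colour $j$, a contradiction. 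The other nesting is symmetric, using that $y_1y_3,y_2y_3$ have colour $\ell$ to trap $r_{x_2}+r_{y_3}$ in $[\theta_{2\ell-1},\theta_{2\ell})$ and produce the cross edge $x_2y_3$. Since the cases are exhaustive, this completes $(i)$; I anticipate no difficulty beyond phrasing the nested case cleanly and verifying the exhaustiveness of the case split.
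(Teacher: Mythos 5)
Your proof is correct, and at bottom it runs on the same engine as the paper's: exhibit a cross pair, one vertex from each triangle, whose rank sum is trapped in a colour interval and is therefore forced to be an edge between two vertex-disjoint triangles. Indeed, your three candidate cross pairs correspond exactly to the paper's three nonedges under a relabelling (the paper places the colour-$i$ edge at $x_1x_3$ with apex $x_2$, you place it at $x_1x_2$ with apex $x_3$): your colour-$i$ pair of endpoints matches the paper's $x_3y_3$, your (apex of $T_x$, colour-$i$ endpoint of $T_y$) pair trapped in colour $j$ matches the paper's $x_2y_3$, and symmetrically for colour $\ell$. What differs is only the organization of the case analysis. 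The paper solves for the ranks in terms of the six edge rank sums, sets $D=a_2-a_1\geq 0$ and three ordered quantities $A\leq B\leq C$ with $A\leq 0$, and rules out $D\in[A,B]$, $D\in[B,C]$, $D\in[C,\infty)$ in turn, so exhaustiveness is immediate from $D\geq 0\geq A$; you instead sort the ranks of the colour-$i$ endpoints and split into ``coordinatewise comparable'' (handled exactly as in Lemma~\ref{NoTwoSameColor}) versus ``strictly nested'' (handled by sandwiching a cross sum between the two equal-coloured edge sums of one triangle, which is where the repeated colour is used). Your Boolean check of exhaustiveness is correct, your separate treatment of $j=\ell$ via Lemma~\ref{NoTwoSameColor} is legitimate (the paper's computation happens to cover that case uniformly), and the transfer to part $(ii)$ works as you describe. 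Your version avoids all the algebra and makes the role of the repeated colour more transparent; the paper's version trades that transparency for a case split whose completeness requires no checking.
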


\begin{proof} 
	We will only prove $(i)$ as the proof of $(ii)$ is similar.
	Let $r$ be a $\left(\theta_1, \theta_2,\dots, \theta_k \right)$-representation of $nK_3$ where $\theta_1<\theta_2<\cdots< \theta_k$.
	Suppose to the contrary that there are two triangles $T_x$ and $T_y$ in $nK_3$ of colors $ijj$ and $i\ell\ell$ respectively. 
	Thus if $V(T_x)=\{x_1, x_2, x_3\}$ and  $V(T_y)=\{y_1, y_2, y_3\}$, then their edge rank sums are as follows:
	\begin{align*}
		 a_1&=r_{x_1}+r_{x_3}\in\left[\theta_{2i-1},\theta_{2i}\right), &b_1&=r_{x_1}+r_{x_2}\in\left[\theta_{2j-1}, \theta_{2j}\right), &b_2&=r_{x_2}+r_{x_3}\in\left[\theta_{2j-1}, \theta_{2j}\right), 
		 \\
		 a_2&=r_{y_1}+r_{y_3}\in\left[\theta_{2i-1}, \theta_{2i}\right), &c_1&=r_{y_1}+r_{y_2}\in\left[\theta_{2\ell-1}, \theta_{2\ell}\right), &c_2&=r_{y_2}+r_{y_3}\in\left[\theta_{2\ell-1}, \theta_{2\ell}\right).
	\end{align*}
From these rank sums, we can compute the ranks as follows:
\begin{align*}
	r_{x_1}&=\frac{a_1+b_1-b_2}{2},\ &r_{x_2}&=\frac{b_1+b_2-a_1}{2},\ &r_{x_3}&=\frac{a_1+b_2-b_1}{2}, 
	\\
	r_{y_1}&=\frac{a_2+c_1-c_2}{2},\ &r_{y_2}&=\frac{c_1+c_2-a_2}{2},\ &r_{y_3}&=\frac{a_2+c_2-c_1}{2}.
\end{align*}
	Without loss of generality, let $a_1\leq a_2$, $b_1\leq b_2$ and $c_1\leq c_2$.
	Let $D=a_2-a_1\geq0$ and let
\begin{align*}
	A&=b_1-b_2+c_1-c_2,\\
	B&=-b_1+b_2+c_1-c_2,\\  
	C&=-b_1+b_2-c_1+c_2.
\end{align*}
	Note that $A\leq B\leq C$ and $A\leq 0$.
	Since $D\geq 0\geq A$, either $D\in [A, B]$, $D\in [B, C]$ or $D\in [C, \infty)$.
	We obtain a contradiction by the following three claims.
\begin{claim}
	$D\notin [A, B]$.
\end{claim}
	Since $x_2y_3$ is a nonedge, we cannot have $b_1\leq r_{x_2}+r_{y_3}\leq b_2$;  otherwise, $r_{x_2}+r_{y_3}\in\left[\theta_{2j-1}, \theta_{2j}\right)$.
	Observe that
\begin{equation*}
	\begin{aligned}
		b_1\leq r_{x_2}+r_{y_3}\leq b_2 & \iff b_1\leq \frac{b_1+b_2-a_1}{2}+\frac{a_2+c_2-c_1}{2}\leq b_2\\
		& \iff b_1-b_2+c_1-c_2\leq a_2-a_1\leq -b_1+b_2+c_1-c_2 \\
		& \iff A\leq D \leq B.
	\end{aligned}
\end{equation*}
\begin{claim}
$D\notin [B, C]$.
\end{claim}
Since $x_3y_2$ is a nonedge, we cannot have $c_1\leq r_{x_3}+r_{y_2}\leq c_2$;  otherwise, $r_{x_3}+r_{y_2}\in\left[\theta_{2\ell -1}, \theta_{2\ell}\right)$.
Note that
\begin{equation*}
	\begin{aligned}
		c_1\leq r_{x_3}+r_{y_2}\leq c_2 & \iff c_1\leq \frac{a_1+b_2-b_1}{2}+\frac{c_1+c_2-a_2}{2}\leq c_2\\
		& \iff -b_1+b_2+c_1-c_2\leq a_2-a_1\leq -b_1+b_2-c_1+c_2 \\
		& \iff B\leq D \leq C.
	\end{aligned}
\end{equation*}
\begin{claim}
$D\notin [C, \infty)$.
\end{claim}
Since $x_3y_3$ is a nonedge, we cannot have $a_1\leq r_{x_3}+r_{y_3}\leq a_2$;  otherwise, $r_{x_3}+r_{y_3}\in\left[\theta_{2i-1}, \theta_{2i}\right)$.
Observe that
\begin{align*}
		a_1\leq r_{x_3}+r_{y_3}\leq a_2 & \iff a_1\leq \frac{a_1+b_2-b_1}{2}+\frac{a_2+c_2-c_1}{2}\leq a_2\\
		& \iff a_1-a_2\leq-b_1+b_2-c_1+c_2\leq a_2-a_1 \\
		& \iff -D\leq C \leq D.\qedhere
	\end{align*}
\end{proof}	
We apply Lemmas~\ref{NoTwoSameColor} and~\ref{KeyLowerBound} to determine the maximum number of triangles and parts in terms of the number of colors, which in turn gives lower bounds for the threshold numbers.
\begin{lem} \label{Max.of.n}
\begin{enumerate}[(i)]
	\item If there are at most $m$ colors of edges in $nK_3$, then $n\leq m +\binom{m}{3}$.	
	In particular, if $nK_3$ is a $k$-threshold graph, then $n\leq \left\lceil{\frac{k}{2}}\right\rceil +\binom{\left\lceil k/2 \right\rceil}{3}$.	
	\item If there are at most $m$ colors of nonedges in $K_{n\times3}$, then $n\leq m +\binom{m}{3}$.	
	In particular, if $K_{n\times3}$ is a $k$-threshold graph, then $n\leq \left\lceil{\frac{k+1}{2}}\right\rceil +\binom{\left\lceil (k+1)/2 \right\rceil}{3}$.	
\end{enumerate}	
\end{lem}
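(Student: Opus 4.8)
The plan is to count triangles (in case (i)) or parts (in case (ii)) according to the size-$3$ color multiset they carry. I will describe case (i) only; case (ii) is word-for-word the same with ``triangle'' replaced by ``part'', ``edge'' by ``nonedge'', and Lemmas~\ref{NoTwoSameColor}(i), \ref{KeyLowerBound}(i) replaced by their part~(ii) counterparts. Each triangle $T$ of $nK_3$ carries a multiset of three colors from a set of size $m$, and by Lemma~\ref{NoTwoSameColor}(i) distinct triangles carry distinct multisets; hence it suffices to bound the number of size-$3$ multisets that can occur.

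I would split the possible color multisets into \emph{rainbow} colors $ij\ell$ with $i,j,\ell$ pairwise distinct, and \emph{non-rainbow} colors, i.e.\ those of the form $ijj$ (two edges of one color $j$, one edge of color $i$), where I explicitly allow $i=j$, which is the monochromatic color $iii$. There are exactly $\binom{m}{3}$ rainbow multisets, so at most $\binom{m}{3}$ triangles are rainbow. For the non-rainbow triangles I associate to each such $T$ a \emph{minority color} $\mu(T)\in\{1,\dots,m\}$: if $T$ has color $ijj$ with $i\ne j$, set $\mu(T)=i$ (the color on the unique edge not colored $j$); if $T$ has color $iii$, set $\mu(T)=i$. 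I claim $\mu$ is injective on non-rainbow triangles. Indeed, if $T_1\ne T_2$ have colors $ij_1j_1$ and $ij_2j_2$ with $\mu(T_1)=\mu(T_2)=i$, then either $j_1=j_2$, so $T_1$ and $T_2$ have the same color, contradicting Lemma~\ref{NoTwoSameColor}(i); or $j_1\ne j_2$, and then colors $ij_1j_1$ and $ij_2j_2$ appear on two triangles simultaneously, contradicting Lemma~\ref{KeyLowerBound}(i) (applied to the colors $i,j_1,j_2$, the case where one of $j_1,j_2$ equals $i$ being included). Hence at most $m$ triangles are non-rainbow, and since every triangle is rainbow or non-rainbow but not both, $n\le m+\binom{m}{3}$.

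For the ``in particular'' statement: if $nK_3$ is a $k$-threshold graph then, by the coloring convention, every edge receives a color in $\{1,\dots,\lceil k/2\rceil\}$, so we may take $m=\lceil k/2\rceil$ and conclude $n\le\lceil k/2\rceil+\binom{\lceil k/2\rceil}{3}$. Likewise, in a $k$-threshold representation of $K_{n\times3}$ every nonedge receives a color in $\{1,\dots,\lceil(k+1)/2\rceil\}$, which gives the stated bound with $m=\lceil(k+1)/2\rceil$.

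I expect the only point requiring care — more a bookkeeping subtlety than a genuine obstacle — to be the treatment of the monochromatic color $iii$: one must legitimately regard it as a ``$ijj$'' color with $i=j$ so that Lemma~\ref{KeyLowerBound}(i) still applies when one of the two triangles is monochromatic, and check that $\mu$ is well defined and injective in that degenerate case. Once Lemmas~\ref{NoTwoSameColor} and~\ref{KeyLowerBound} are granted, everything else is an elementary count.
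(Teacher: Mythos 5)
Your proposal is correct and is essentially the paper's own argument: rainbow triangles are bounded by $\binom{m}{3}$ via Lemma~\ref{NoTwoSameColor}, and non-rainbow triangles are bounded by $m$ because, for each fixed $i$, Lemma~\ref{KeyLowerBound} forbids two triangles of colors $ij_1j_1$ and $ij_2j_2$ (your ``minority color'' map is just a more explicit phrasing of the paper's ``for each color $i$, there is at most one triangle of color of the form $ijj$''). Your extra care with the monochromatic case $i=j$ is a legitimate and welcome clarification, but it does not change the route.
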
 

\begin{proof} 
	We will only prove $(i)$ as the proof of $(ii)$ is similar.
	Suppose that there are at most $m$ colors of edges in $nK_3$.
	By Lemma~\ref{NoTwoSameColor}, there are at most $\binom{m}{3}$ triangles in $nK_3$ whose edges are colored with $3$ colors.
	It is sufficient to show that there are at most $m$ triangles in $nK_3$ whose edges are colored with $1$ or $2$ colors. 
	Indeed, for each color $i$, there is at most one triangle of color of the form $i j j$ where $j\in [m]$ by Lemma~\ref{KeyLowerBound}.
	Thus $n\leq m +\binom{m}{3}$.	
	Note that if $nK_3$ is a $k$-threshold graph, then there are at most $\left\lceil\frac{k}{2}\right\rceil$ colors of edges in $nK_3$, and
	hence, $n\leq \left\lceil{\frac{k}{2}}\right\rceil +\binom{\left\lceil k/2 \right\rceil}{3}$.	
\end{proof}	
The lower bounds for the threshold numbers obtained from Lemma~\ref{Max.of.n} are not sharp. We require another observation which states roughly that the first and last colors appear in at most one triangle or part.
\begin{lem} \label{nocolorK3} 
	\begin{enumerate}[(i)]
		\item In a $\left(\theta_1, \theta_2,\dots, \theta_{2m-1}\right)$-representation of $nK_3$, an edge of color $m$ appears in at most one triangle.
		\item In a $\left(\theta_1, \theta_2,\dots, \theta_m\right)$-representation of $K_{n\times3}$, a nonedge of color $1$ appears in at most one part.
		\item In a $\left(\theta_1, \theta_2,\dots, \theta_{2m}\right)$-representation of $K_{n\times3}$, a nonedge of color $m+1$ appears in at most one part.
	\end{enumerate}
\end{lem}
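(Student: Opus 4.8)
The plan is to prove all three parts by the same device, a one-sided form of the averaging argument used in the proof of Lemma~\ref{NoTwoSameColor}. In each part the color in question is exactly the one whose defining interval is unbounded on one side: with $2m-1$ thresholds an edge of color $m$ in $nK_3$ has rank sum at least $\theta_{2m-1}$ because $\theta_{2m}=\infty$; with $2m$ thresholds a nonedge of color $m+1$ in $K_{n\times3}$ has rank sum at least $\theta_{2m}$ because $\theta_{2m+1}=\infty$; and with $m$ thresholds a nonedge of color $1$ in $K_{n\times3}$ has rank sum less than $\theta_1$ because $\theta_0=-\infty$. In each case I would assume, for a contradiction, that two distinct triangles (resp.\ parts) each carry the relevant color on one of their edges (resp.\ nonedges), then from each of the two relevant pairs select the endpoint of largest rank -- or of smallest rank, in the case of color $1$ -- and examine the cross pair formed by these two selected vertices. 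The single inequality $\max\{a_1,a_2\}+\max\{b_1,b_2\}\ge\tfrac12(a_1+a_2)+\tfrac12(b_1+b_2)$, together with its dual for minima, forces the rank sum of this cross pair into the same unbounded interval, contradicting the structure of $nK_3$ or $K_{n\times3}$.

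Concretely, for $(i)$: suppose distinct triangles $T_x$ and $T_y$ contain edges $x_1x_2$ and $y_1y_2$ of color $m$, so $r_{x_1}+r_{x_2}\ge\theta_{2m-1}$ and $r_{y_1}+r_{y_2}\ge\theta_{2m-1}$. Choosing $x_p\in\{x_1,x_2\}$ and $y_q\in\{y_1,y_2\}$ of larger rank in their respective pairs gives $r_{x_p}+r_{y_q}\ge\tfrac12(r_{x_1}+r_{x_2})+\tfrac12(r_{y_1}+r_{y_2})\ge\theta_{2m-1}$, so all $2m-1$ thresholds do not exceed $r_{x_p}+r_{y_q}$; this odd count makes $x_py_q$ an edge, which is impossible since $x_p$ and $y_q$ lie in different triangles of $nK_3$. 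Part $(iii)$ is the same with ``triangle'' replaced by ``part'' and $\theta_{2m-1}$ by $\theta_{2m}$: now $2m$ thresholds do not exceed the cross-pair rank sum, an even count, so $x_py_q$ is a nonedge, which is impossible since $x_p$ and $y_q$ lie in different parts of $K_{n\times3}$ and hence span an edge.

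Part $(ii)$ runs the mirror argument from below. If distinct parts contain nonedges $x_1x_2$ and $y_1y_2$ of color $1$, then $r_{x_1}+r_{x_2}<\theta_1$ and $r_{y_1}+r_{y_2}<\theta_1$, and choosing $x_p$ and $y_q$ of smaller rank in each pair gives $r_{x_p}+r_{y_q}\le\tfrac12(r_{x_1}+r_{x_2})+\tfrac12(r_{y_1}+r_{y_2})<\theta_1$, so the number of thresholds not exceeding $r_{x_p}+r_{y_q}$ is $0$, making $x_py_q$ a nonedge -- again impossible, as $x_p$ and $y_q$ are in different parts. I do not anticipate a genuine obstacle: once the relevant color is identified as the one with an unbounded interval, the averaging inequality does all the work. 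The only care needed is the parity bookkeeping -- checking that the number ($2m-1$, $2m$, or $0$) of thresholds not exceeding the cross-pair rank sum is the parity that conflicts with the adjacency forced by the component structure of $nK_3$ or the multipartite structure of $K_{n\times3}$ -- and noting that a rank sum coinciding with a threshold causes no problem since ``not exceeding'' is non-strict. The argument is, in spirit, the case of Lemma~\ref{NoTwoSameColor} in which the repeated color among the three is the outermost one.
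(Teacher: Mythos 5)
Your proof is correct and is essentially the paper's own argument: the paper's step ``assume WLOG $r_{x_1}, r_{y_1}\geq\frac{\theta_{2m-1}}{2}$'' is exactly your selection of the larger-rank endpoint from each pair, and the conclusion that the cross pair's rank sum falls in the outermost (unbounded) interval is the same in both. The parity bookkeeping you spell out for parts $(ii)$ and $(iii)$ is what the paper leaves implicit under ``the proofs are similar,'' and it checks out.
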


\begin{proof}
	We only prove $(i)$ as the proofs of $(ii)$ and $(iii)$ are similar. 
	Let $r$ be a $\left(\theta_1, \theta_2,\dots, \theta_{2m-1} \right)$-representation of $nK_3$.
	Suppose to the contrary that there are two triangles $T_x$ and $T_y$ in $nK_3$ with an edge of color $m$. 
	Let $x_1, x_2, x_3$ and $y_1, y_2, y_3$ be vertices of $T_x$ and $T_y$  respectively  where $x_1x_2$ and $y_1y_2$ are edges of color $m$, that is 
	$r_{x_1}+r_{x_2}$, $r_{y_1}+r_{y_2}\geq \theta_{2m-1}$.
	Assume without loss of generality that  $r_{x_1}, r_{y_1}\geq\frac{\theta_{2m-1}}{2}$.
	Thus $r_{x_1}+r_{y_1}\geq\theta_{2m-1}$, which implies that $x_1y_1$ is an edge in $nK_3$, a contradiction.
\end{proof}

The upper bounds for the threshold numbers will be obtained by rank assignments of the following forms. 
A rank assignment $r$ of $nK_3$ is said to be an \emph{$\left\lbrace a_1, a_2,\dots, a_m \right\rbrace$-assignment} if each triangle has edge rank sums of the form $a_i, a_i, a_i$ or $a_i, a_j, a_k$ for distinct $i, j, k\in[m]$, and 
no two triangles have the same multiset of edge rank sums.
Note that if $n\leq m+\binom{m}{3}$, then  an $\left\lbrace a_1, a_2,\dots, a_m \right\rbrace$-assignment of $nK_3$ exists since we can assign any edge rank sums for each triangle as a triangle has edge rank sums $a_i, a_j, a_k$ if and only if its ranks are $\frac{a_i+a_j-a_k}{2}, \frac{a_i+a_k-a_j}{2}, \frac{a_j+a_k-a_i}{2}$.

In the same fasion, a rank assignment $r$ of $K_{n\times3}$ is said to be an \emph{$\left\lbrace a_1, a_2,\dots, a_m \right\rbrace$-assignment} if each part has nonedge rank sums of the form $a_i, a_i, a_i$ or $a_i, a_j, a_k$ for distinct $i, j, k\in[m]$, and 
no two parts have the same multiset of nonedge rank sums.
Observe that an $\left\lbrace a_1, a_2,\dots, a_m \right\rbrace$-assignment of $K_{n\times3}$ exists whenever $n\leq m+\binom{m}{3}$.

The linear independence of $\left\lbrace a_1, a_2, \dots, a_m \right\rbrace$ over $\mathbb{Q}$ is a sufficient condition for the edge and nonedge rank sums in an $\left\lbrace a_1, a_2,\dots, a_m \right\rbrace$-assignment not to  coincide.  
\begin{lem} \label{EnotNE} 
	Let $\left\lbrace a_1, a_2, \dots, a_m \right\rbrace\subset\mathbb{R}$ be a linearly independent set over $\mathbb{Q}$.
\begin{enumerate}[(i)]
	\item In an $\left\lbrace a_1, a_2,\dots, a_m \right\rbrace$-assignment of $nK_3$, the edge and nonedge rank sums do not coincide.
	\item In an $\left\lbrace a_1, a_2,\dots, a_m\right\rbrace$-assignment of $K_{n\times3}$, the edge and nonedge rank sums do not coincide.
\end{enumerate}
\end{lem}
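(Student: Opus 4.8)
The plan is to convert the statement into a question about integer vectors and to settle it by a short case analysis; the linear-independence hypothesis is exactly what licenses this conversion. I treat $(i)$; part $(ii)$ is obtained from it by interchanging the words ``triangle''/``part'' and ``edge''/``nonedge''. Fix an $\left\lbrace a_1,\dots,a_m\right\rbrace$-assignment of $nK_3$ and write $\mathbf{a}=(a_1,\dots,a_m)$. First I would attach to every vertex $u$, sitting in the triangle $T_u$, a \emph{signature} $\sigma(u)\in\mathbb{Z}^m$ built from the standard basis vectors $\mathbf{e}_1,\dots,\mathbf{e}_m$ of $\mathbb{Z}^m$: if the edge rank sums of $T_u$ are $a_j,a_j,a_j$, put $\sigma(u)=\mathbf{e}_j$; if they are $a_i,a_j,a_k$ with $i,j,k$ distinct and $u$ is the vertex incident to the two edges of rank sums $a_i$ and $a_j$, put $\sigma(u)=\mathbf{e}_i+\mathbf{e}_j-\mathbf{e}_k$. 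From the rank formulas recalled just before the lemma one gets $r_u=\tfrac12\left\langle\sigma(u),\mathbf{a}\right\rangle$ (meaning $\tfrac12\sum_i\sigma(u)_i a_i$) in both cases, and, by construction, the multiset of edge rank sums of $T_u$ is recovered from $\sigma(u)$: it is $\left\lbrace a_j,a_j,a_j\right\rbrace$ when $\sigma(u)=\mathbf{e}_j$ and $\left\lbrace a_i,a_j,a_k\right\rbrace$ when $\sigma(u)=\mathbf{e}_i+\mathbf{e}_j-\mathbf{e}_k$.

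Next I would reduce a hypothetical coincidence of rank sums to a vector equation. An edge rank sum is some $a_{i_0}=\tfrac12\left\langle 2\mathbf{e}_{i_0},\mathbf{a}\right\rangle$, while a nonedge rank sum is $r_u+r_v=\tfrac12\left\langle\sigma(u)+\sigma(v),\mathbf{a}\right\rangle$ for vertices $u,v$ with $T_u\neq T_v$. Since $\left\lbrace a_1,\dots,a_m\right\rbrace$ is linearly independent over $\mathbb{Q}$, the map $w\mapsto\left\langle w,\mathbf{a}\right\rangle$ is injective on $\mathbb{Q}^m$, so an equality of such rank sums forces $\sigma(u)+\sigma(v)=2\mathbf{e}_{i_0}$ for some $i_0\in[m]$. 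All entries of $\sigma(u)$ and $\sigma(v)$ lie in $\left\lbrace-1,0,1\right\rbrace$; comparing the $i_0$-th coordinate of the two sides forces $\sigma(u)_{i_0}=\sigma(v)_{i_0}=1$, and comparing the remaining coordinates forces $\sigma(v)=2\mathbf{e}_{i_0}-\sigma(u)$.

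It then remains to inspect the two possible shapes of $\sigma(u)$, using $\sigma(u)_{i_0}=1$. Either $\sigma(u)=\mathbf{e}_{i_0}$, and then $\sigma(v)=2\mathbf{e}_{i_0}-\mathbf{e}_{i_0}=\mathbf{e}_{i_0}$; or $\sigma(u)=\mathbf{e}_{i_0}+\mathbf{e}_j-\mathbf{e}_k$ with $i_0,j,k$ distinct, and then $\sigma(v)=\mathbf{e}_{i_0}+\mathbf{e}_k-\mathbf{e}_j$, which again has the form $\mathbf{e}_p+\mathbf{e}_q-\mathbf{e}_s$ with $p,q,s$ distinct. In either case the read-off from the first paragraph shows that $T_u$ and $T_v$ have the same multiset of edge rank sums, which is impossible since $T_u\neq T_v$ and an $\left\lbrace a_1,\dots,a_m\right\rbrace$-assignment forbids two triangles from having the same multiset of edge rank sums. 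This proves $(i)$, and $(ii)$ follows by the same argument with parts and edges. I expect the only delicate point to be the bookkeeping: checking that a signature vector unambiguously determines the multiset of rank sums of its triangle (the vectors $\mathbf{e}_j$ and $\mathbf{e}_i+\mathbf{e}_j-\mathbf{e}_k$ with distinct $i,j,k$ are pairwise distinct, and the latter recovers $\left\lbrace i,j\right\rbrace$ and $k$ from its $\pm1$ positions), and keeping the coordinate comparison in the second paragraph straight. The linear independence of $\left\lbrace a_1,\dots,a_m\right\rbrace$ is used only to turn an equality of real numbers into an equality of integer vectors; after that the argument is entirely combinatorial.
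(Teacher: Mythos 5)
Your proposal is correct and is essentially the paper's own argument in vector notation: both express each rank as $\tfrac{1}{2}(a_i+a_j-a_k)$ with the indices all equal or all distinct, use $\mathbb{Q}$-linear independence to turn the hypothetical coincidence $r_u+r_v=a_{i_0}$ into an integer identity among coefficients, and derive a contradiction from the fact that distinct triangles carry distinct multisets of edge rank sums. The only cosmetic difference is the direction of the contradiction — the paper assumes the multisets differ and finds an index with coefficient $\pm1$ on one side of the identity, whereas you show the identity forces the multisets to coincide — which is the same argument read contrapositively.
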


\begin{proof}
$(i)$ Let $\left\lbrace a_1, a_2, \dots, a_m \right\rbrace\subset\mathbb{R}$ be a linearly independent set over $\mathbb{Q}$ and 
let $r$ be the $\left\lbrace a_1, a_2, \dots, a_m \right\rbrace$-assignment of $nK_3$.
Suppose to the contrary that there exists a  nonedge $xy$ in $nK_3$ such that $r_x+r_y=a_\ell$ for some $\ell\in[m]$.
Since each triangle has edge rank sums of the form $a_i, a_j, a_k$ where $i, j, k\in[m]$ are all equal or all distinct, the rank of each vertex is of the form $\frac{a_i+a_j-a_k}{2}$.
Thus we suppose that $r_x=\frac{a_i+a_j-a_k}{2}$ and  $r_y=\frac{a_r+a_s-a_t}{2}$ 
where $i, j, k\in[m]$ are all equal or all distinct, $r, s, t\in[m]$ are all equal or all distinct, and $\left\lbrace i, j, k \right\rbrace \neq \left\lbrace r, s, t \right\rbrace$.
Hence, $r_x+r_y=a_\ell$ becomes
\begin{equation*}
	a_i+a_j-a_k+a_r+a_s-a_t=2a_{\ell}. 
\end{equation*}
Since $\left\lbrace a_i, a_j, a_k \right\rbrace \neq \left\lbrace a_r, a_s, a_t \right\rbrace$, there exists an element in one set not appearing in the other set, say $a_i\notin\left\lbrace a_r, a_s, a_t \right\rbrace$.
Since $i, j, k$ are all equal or all distinct, the coefficient of $a_i$ after simplifying the left hand side of the equation is $1$.
Thus the left hand side cannot equal  $2a_\ell$, a contradiction.

The proof of $(ii)$ is similar.
\end{proof}
We are now ready to prove Theorem~\ref{Main theorem K3}$(i)$.

\begin{proof} [Proof of Theorem~\ref{Main theorem K3}$(i)$] 
	
Let $m$ be such that $q_{m-1} \leq n < q_m$. 
Suppose to the contrary that $\Theta(nK_3)\leq 2m-2$. 
By Lemma~\ref{Max.of.n}$(i)$, 
\begin{equation*}
n\leq \left\lceil{\frac{\Theta(nK_3)}{2}}\right\rceil+\binom{\left\lceil\frac{\Theta(nK_3)}{2}\right\rceil}{3} 
\leq \left\lceil{\frac{2m-2}{2}}\right\rceil+
\binom{\left\lceil\frac{2m-2}{2}\right\rceil}{3}
=(m-1)+\binom{m-1}{3}
=q_{m-1}-1
\end{equation*}
contradicting the definition of $m$. Thus $\Theta(nK_3)\geq2m-1$. 

To prove that $\Theta(nK_3)\leq2m$, 
let $A=\left\{a_1, a_2, \dots, a_m\right\}\subset \mathbb{R^{+}}$ be a linearly independent set over $\mathbb{Q}$,
for example, let  $a_i=\sqrt{p_i}$ where $p_i$ is  the $i^{\text{th}}$ prime number.
Since $n\leq q_m-1 = m+\binom{m}{3}$,
we can pick an $A$-assignment for $nK_3$.
By Lemma~\ref{EnotNE}$(i)$, the edge and nonedge rank sums do not coincide. 
We separate the edge and nonedge rank sums by putting two thresholds around each edge rank sum.
For $i=1, 2,\dots, m$, let $\theta_{2i-1}=a_i$ and 
$\theta_{2i}=a_i+\varepsilon$ be thresholds of $nK_3$ where $\varepsilon$ is a sufficiently small positive real number, for example, take $\varepsilon$ smaller than any distance between two distinct rank sums of $nK_3$.
Thus the above rank assignment is a $\left(\theta_1, \theta_2,\dots, \theta_{2m} \right)$-representation of $nK_3$.
Hence, $nK_3$ is a $2m$-threshold graph, that is $\Theta(nK_3)\leq2m$ as desired.

Suppose that $n=q_{m-1}$. 
To prove that $\Theta(nK_3)\leq2m-1$, 
let $A=\left\{a_1, a_2, \dots, a_m\right\}\subset\mathbb{R^{+}}$ be a linearly independent set over $\mathbb{Q}$ such that $a_1<a_2<\dots<a_{m-1}\leq\frac{a_m}{2}$.
We then pick an $A\setminus \left\lbrace a_m \right\rbrace$-assignment for the first $(m-1)+\binom{m-1}{3}$ triangles in $nK_3$, and let the last triangle have edge rank sums $a_m, a_m, a_m$. 
Note that this is an $A$-assignment of $nK_3$.
By Lemma~\ref{EnotNE}$(i)$, the edge and nonedge rank sums do not coincide. 
We separate the edge and nonedge rank sums by putting two thresholds around each edge rank sum.
For $i=1, 2,\dots, m$, let $\theta_{2i-1}=a_i$ and 
$\theta_{2i}=a_i+\varepsilon$ be thresholds of $nK_3$ where $\varepsilon$ is a sufficiently small positive real number. 
Thus the above rank assignment is a $\left(\theta_1, \theta_2,\dots, \theta_{2m} \right)$-representation of $nK_3$.
In fact, we will show that we do not need the last threshold $\theta_{2m}$ by proving that no rank sum exceeds $\theta_{2m-1}$.
It is sufficient to show that the rank of each vertex is at most $\frac{\theta_{2m-1}}{2}=\frac{a_m}{2}$.
This is clear for the last triangle with edge rank sums $a_m, a_m, a_m$.
For the other triangles, the rank of each vertex is of the form $\frac{a_i+a_j-a_k}{2}$ for some $i, j, k\in [m-1]$, which is at most $\frac{a_m}{2}$ since $a_i, a_j\leq\frac{a_m}{2}$ and $a_k>0$.
Thus the above rank assignment is a $\left(\theta_1, \theta_2,\dots, \theta_{2m-1} \right)$-representation of $nK_3$.
Therefore, $nK_3$ is a $(2m-1)$-threshold graph, that is $\Theta(nK_3)\leq2m-1$ as desired.
	
Suppose that $n>q_{m-1}$.
To prove that $\Theta(nK_3)\geq2m$, we suppose that $\Theta(nK_3)\leq2m-1$.
Let $r$ be a $\left(\theta_1, \theta_2,\dots, \theta_{2m-1} \right)$-representation of $nK_3$.
Then there are at most $m$ colors of edges in $nK_3$.
By Lemma~\ref{Max.of.n}$(i)$, there are at most $q_{m-1}-1$ triangles without color $m$.
By Lemma~\ref{nocolorK3}$(i)$, an edge of color $m$ appears in at most one triangle.
Thus $n\leq \left(q_{m-1}-1\right)+1$, a contradiction.
Therefore, $\Theta(nK_3)\geq2m$.
\end{proof}
By applying Theorem~\ref{Main theorem K3}$(i)$ together with Proposition~\ref{complement}, we can narrow down the possible values of $\Theta(K_{n\times3})$ to just two numbers.

\begin{proof} [Proof of Theorem~\ref{Main theorem K3}$(ii)$]
Let $m$ be such that $p_{m-1} \leq n < p_m$.
By Theorem~\ref{Main theorem K3}$(i)$,  $\Theta(nK_3)\in \left\lbrace 2m, 2m+1 \right\rbrace$, and hence $\Theta(K_{n\times3})\in \left\lbrace 2m, 2m+1 \right\rbrace$ by Proposition~\ref{complement}.
	
Suppose that $n=p_{m-1}$. 
To prove that $\Theta(K_{n\times3})\leq2m$, 
let $A=\left\{a_1, a_2, \dots, a_{m+1}\right\}\subset\mathbb{R}$ be a linearly independent set over $\mathbb{Q}$ such that 
$a_1<a_2<\dots<a_{m+1}$, 
$-\left|a_i\right| \geq \frac{a_1}{3}$ and $\left|a_i\right|\leq\frac{a_{m+1}}{3}$ for all $i\in[m]\setminus\left\lbrace 1 \right\rbrace$.
We pick an $A\setminus\left\lbrace a_1, a_{m+1} \right\rbrace$-assignment for the first $(m-1)+\binom{m-1}{3}$ parts in $K_{n\times3}$, and let the last two parts have nonedge rank sums $a_1, a_1, a_1$ and $a_{m+1}, a_{m+1}, a_{m+1}$. 
Note that this is an $A$-assignment of $K_{n\times3}$.
By Lemma~\ref{EnotNE}$(ii)$, the edge and nonedge rank sums do not coincide. 
Let $\theta_1$ be smaller than all rank sums.
We then separate the edge and nonedge rank sums by putting two thresholds around each interval of nonedge rank sums.
For $i=1, 2,\dots, m+1$, let $\theta_{2i}=a_i$ and 
$\theta_{2i+1}=a_i+\varepsilon$ where $\varepsilon$ is a sufficiently small positive real number. 
Thus the above rank assignment is a $\left(\theta_1, \theta_2,\dots, \theta_{2m+3} \right)$-representation of $K_{n\times3}$.
In fact, we will show that we do not need the thresholds $\theta_1, \theta_2$ and $\theta_{2m+3}$ by proving that no rank sum is smaller than $\theta_2$ or larger than $\theta_{2m+2}$.
It is sufficient to show that the rank of each vertex is at least $\frac{\theta_{2}}{2}=\frac{a_1}{2}$ and at most $\frac{\theta_{2m+2}}{2}=\frac{a_{m+1}}{2}$.
This is clear for the last two parts with nonedge rank sums  $a_1, a_1, a_1$ and $a_{m+1}, a_{m+1}, a_{m+1}$.
For the other parts, the rank of each vertex is of the form $\frac{a_i+a_j-a_k}{2}$ for some $i, j, k\in [m]\setminus\left\lbrace 1 \right\rbrace$, which is at least $\frac{a_1}{2}$ and at most $\frac{a_{m+1}}{2}$ since $\frac{a_1}{3} \leq a_i, a_j, -a_k\leq\frac{a_{m+1}}{3}$.
Thus the above rank assignment is a $\left(\theta_3, \theta_4,\dots, \theta_{2m+2} \right)$-representation of $K_{n\times3}$.
Therefore, $K_{n\times3}$ is a $2m$-threshold graph, that is $\Theta(K_{n\times3})\leq2m$ as desired.
			
Suppose that $n>p_{m-1}$.
To prove that $\Theta(K_{n\times3})\geq2m+1$, we suppose that $\Theta(K_{n\times3})\leq2m$.
Let $r$ be a $\left(\theta_1, \theta_2,\dots, \theta_{2m} \right)$-representation of $K_{n\times3}$.
Then there are at most $m+1$ colors of nonedges in $K_{n\times3}$.
By Lemma~\ref{Max.of.n}$(ii)$, there are at most $p_{m-1}-2$ parts without colors $1$ and $m+1$.
By Lemma~\ref{nocolorK3}$(ii)$ and~\ref{nocolorK3}$(iii)$, a nonedge of color $1$ appears in at most one part and a nonedge of color $m+1$ also appears in at most one part.
Therefore, $n\leq \left(p_{m-1}-2\right)+1+1$, a contradiction.
\end{proof}

\section{Threshold numbers of $K_{n\times4}$ and $nK_4$} \label{scetion-K4andKnx4}

In this section, we focus on the threshold numbers of $K_{n\times4}$ and $nK_4$. 
We will need Lemmas~\ref{NoTwoSameColor} and~\ref{KeyLowerBound} as well as five new lemmas. 
Lemma~\ref{assign} identifies all sets of edge rank sums that can appear in a $K_4$.
Lemmas~\ref{lemK4} and~\ref{nocolorK4} are for the lower bounds where the key idea is in Lemma~\ref{lemK4}. We apply Lemmas~\ref{NoTwoSameColor} and~\ref{KeyLowerBound} to prove Lemma~\ref{lemK4}, which provide the maximum number of $K_4$'s and parts in terms of the number of colors.
Lemma~\ref{nocolorK4} improves the lower bounds obtained from Lemma~\ref{lemK4}.
On the other hand, Lemma~\ref{EnotNEK4} which is a tool to prove the upper bounds utilizes Lemma~\ref{sumeven} in its proof.  

We start by assigning a color to each edge of $nK_4$ and each nonedge of $K_{n\times4}$ as follows.
In a $\left(\theta_1, \theta_2,\dots, \theta_k \right)$-representation of $nK_4$ where $\theta_1<\theta_2<\cdots< \theta_k$, we color an edge $uv$ with \emph{color $i$}, for $i\in \left\lbrace 1, 2,\dots, \left\lceil{\frac{k}{2}} \right\rceil\right\rbrace$, if $r_u + r_v\in\left[\theta_{2i-1},\theta_{2i}\right)$.
Similarly, in a $\left(\theta_1, \theta_2,\dots, \theta_k \right)$-representation of $K_{n\times4}$ where $\theta_1<\theta_2<\cdots< \theta_k$, we color a nonedge $xy$ with \emph{color $i$}, for $i\in \left\lbrace 1, 2,\dots, \left\lceil{\frac{k+1}{2}} \right\rceil\right\rbrace$, if $r_x + r_y\in\left[\theta_{2i-2},\theta_{2i-1}\right)$ where $\theta_0=-\infty$.

\begin{figure*}[t]
	\centering
	\begin{tikzpicture}
		[scale=.6,auto=center]
		\tikzset{enclosed/.style={draw, circle, inner sep=0pt, minimum size=.1cm, fill=black}}
		
		\node[enclosed] (1) at (0,3) {};
		\node[enclosed] (2) at (3,3) {};
		\node[enclosed] (3) at (3,0) {};
		\node[enclosed] (4) at (0,0) {};
		
		\draw (1) -- (2) node[midway, above] (edge1) {$a_1$};
		\draw (1) -- (3) node[near start, below] (edge2) {$a_3$};
		\draw (1) -- (4) node[midway, left] (edge3) {$a_2$};
		\draw (2) -- (3) node[midway, right] (edge4) {$b_2$};
		\draw (2) -- (4) node[near start, below] (edge5) {$b_3$};
		\draw (3) -- (4) node[midway, below] (edge6) {$b_1$};
	\end{tikzpicture}
	\caption{$K_4\left(a_1, b_1, a_2, b_2, a_3, b_3\right)$ \label{F1}}
\end{figure*}
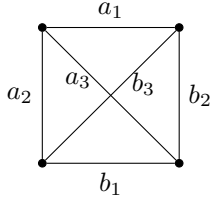

We denote by $K_4\left(a_1, b_1, a_2, b_2, a_3, b_3\right)$ a $K_4$ each of whose vertices is assigned a rank so that the edge rank sums are
$a_1$, $b_1$, $a_2$, $b_2$, $a_3$, $b_3$
where $a_i$ and $b_i$ belong to a perfect matching for each $i$ as shown in Figure~\ref{F1}.
Observe that $K_4\left(a_1, b_1, a_2, b_2, a_3, b_3\right)$ and $K_4\left(a_1, b_1, a_2, b_2, b_3, a_3\right)$ are not isomorphic, while $K_4\left(b_1, a_1, a_2, b_2, a_3, b_3\right)$, $K_4\left(a_1, b_1, b_2, a_2, a_3, b_3\right)$ and $K_4\left(a_1, b_1, a_2, b_2, b_3, a_3\right)$ are isomorphic.
For convenience, we write $K_4\left(c\right)$ for $K_4\left(c, c, c, c, c, c\right)$. 

In the same fasion, 
we denote by $E_4\left(a_1, b_1, a_2, b_2, a_3, b_3\right)$ an empty graph on four vertices having nonedge rank sums $a_1, b_1, a_2, b_2, a_3$ and $b_3$ where $a_i$ and $b_i$ belong to an independent nonedges for each $i$.

It is easy to determine which edge rank sums $a_1, b_1, a_2, b_2, a_3, b_3$ can appear in a $K_4$.
\begin{prop} \label{assign}
	The following statements are equivalent:
	\begin{enumerate}[(i)]	
		\item $K_4\left(a_1, b_1, a_2, b_2, a_3, b_3\right)$ exists.
		
		\item $E_4\left(a_1, b_1, a_2, b_2, a_3, b_3\right)$ exists.
		
		\item $a_1+b_1=a_2+b_2=a_3+b_3$.
	\end{enumerate}	
\end{prop}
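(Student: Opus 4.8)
The plan is to prove the three-way equivalence by showing $(i)\Rightarrow(iii)$, $(iii)\Rightarrow(i)$, $(iii)\Rightarrow(ii)$, and $(ii)\Rightarrow(iii)$; since $K_4$ and $E_4$ are both "rank-sum" structures on four vertices (one recording all pairwise sums as edges, the other recording all pairwise sums as nonedges, with the same combinatorial labelling by the three perfect matchings), the arguments for $K_4$ and $E_4$ are literally identical, so I would prove $(i)\Leftrightarrow(iii)$ in full and remark that $(ii)\Leftrightarrow(iii)$ follows verbatim.

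For $(i)\Rightarrow(iii)$: suppose $K_4(a_1,b_1,a_2,b_2,a_3,b_3)$ exists, with vertex ranks $r_1,r_2,r_3,r_4$ as in Figure~\ref{F1}, so that (reading off the figure) $a_1=r_1+r_2$, $b_1=r_3+r_4$, $a_2=r_1+r_4$, $b_2=r_2+r_3$, $a_3=r_1+r_3$, $b_3=r_2+r_4$. Then each of $a_i+b_i$ is a sum over a perfect matching of $K_4$, hence equals $r_1+r_2+r_3+r_4$; in particular $a_1+b_1=a_2+b_2=a_3+b_3$.

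For $(iii)\Rightarrow(i)$: given $a_1+b_1=a_2+b_2=a_3+b_3=:S$, I want to produce ranks $r_1,r_2,r_3,r_4$ realizing these six edge rank sums with the matching structure of the figure. Solving the linear system above, one gets $r_1=\tfrac{a_1+a_2+a_3-S}{2}$, $r_2=\tfrac{a_1+b_2+b_3-S}{2}$ (equivalently $r_2=a_1-r_1$), $r_3=\tfrac{b_1+b_2+a_3-S}{2}$, $r_4=\tfrac{b_1+a_2+b_3-S}{2}$; then I would simply verify that all six pairwise sums come out as prescribed, using $b_i=S-a_i$ repeatedly. One should note (or it falls out of the check) that no nondegeneracy hypothesis is needed: the $r_i$ need not be distinct, and the definition of $K_4(\cdots)$ only asks that the six pairwise sums be the listed values with the listed matching pattern, so the construction always works once the common-sum condition holds.

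The same computation with "edge rank sum" replaced by "nonedge rank sum" gives $(iii)\Rightarrow(ii)$ and $(ii)\Rightarrow(iii)$. I do not expect a genuine obstacle here — this is a finite linear-algebra verification — but the one place to be careful is bookkeeping: matching the indices $a_i,b_i$ to the correct pairs of vertices exactly as drawn in Figure~\ref{F1} (so that $a_1b_1$, $a_2b_2$, $a_3b_3$ really are the three perfect matchings $\{12,34\}$, $\{14,23\}$, $\{13,24\}$), since an index slip would make the stated formulas for the $r_i$ wrong even though the equivalence itself is unaffected. I would present the proof as: "$(i)\Rightarrow(iii)$ is immediate from summing over a perfect matching; for $(iii)\Rightarrow(i)$ set $r_1,\dots,r_4$ as above and check; $(i)\Leftrightarrow(iii)$ and $(ii)\Leftrightarrow(iii)$ have identical proofs."
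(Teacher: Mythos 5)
Your proposal is correct and follows essentially the same route as the paper: $(i)\Rightarrow(iii)$ by summing the ranks over each perfect matching, $(iii)\Rightarrow(i)$ by explicitly solving for the four ranks and verifying the six pairwise sums (the paper first places three ranks to realize $b_1,b_2,b_3$ on a triangle and then adds the fourth vertex, which is the same linear system solved in a slightly different order), and $(ii)\Leftrightarrow(iii)$ verbatim. Your closed-form expressions for $r_1,\dots,r_4$ check out against the labelling in Figure~\ref{F1}, so no gap.
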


\begin{proof}
	$(i)\Rightarrow(iii)$:	
	Suppose that $K_4\left(a_1, b_1, a_2, b_2, a_3, b_3\right)$ exists, that is
	we can assign a rank to each vertex so that the edge rank sums are
	$a_1$, $b_1$, $a_2$, $b_2$, $a_3$, $b_3$
	where $a_i$ and $b_i$ belong to a perfect matching for each $i$.
	Since each perfect matching spans all vertices of the graph, the summation of all ranks is equal to $a_i+b_i$ for each $i$.
	Thus $a_1+b_1=a_2+b_2=a_3+b_3$ as desired.
	\begin{figure*}[b]
		\centering
		\begin{tikzpicture}
			[scale=.6,auto=center]
			\tikzset{enclosed/.style={draw, circle, inner sep=0pt, minimum size=.1cm, fill=black}}
			
			\node[enclosed, label={north west: $w$}] (1) at (0,3) {};
			\node[enclosed, label={north east: $x$}] (2) at (3,3) {};
			\node[enclosed, label={south east: $z$}] (3) at (3,0) {};
			\node[enclosed, label={south west: $y$}] (4) at (0,0) {};
			
			\draw (1) -- (2) node[midway, above] (edge1) {$a_1$};
			\draw (1) -- (3) node[near start, below] (edge2) {$a_3$};
			\draw (1) -- (4) node[midway, left] (edge3) {$a_2$};
			\draw (2) -- (3) node[midway, right] (edge4) {$b_2$};
			\draw (2) -- (4) node[near start, below] (edge5) {$b_3$};
			\draw (3) -- (4) node[midway, below] (edge6) {$b_1$};
		\end{tikzpicture}
		\caption{\label{F3}}
	\end{figure*}	
	$(iii)\Rightarrow(i)$:
	Let $\left\lbrace w, x, y, z\right\rbrace$ be the vertex set of $K_4$. 
	We will provide an assignment $r$ of ranks so that the graph is $K_4\left(a_1, b_1, a_2, b_2, a_3, b_3\right)$ as shown in Figure~\ref{F3}.
	To obtain edge rank sums $b_1, b_2, b_3$ for the triangle $xyz$, we let
	\begin{align*}
		r(x)=\frac{b_2+b_3-b_1}{2},\ r(y)=\frac{b_1+b_3-b_2}{2},\ 
		r(z)=\frac{b_1+b_2-b_3}{2}.
	\end{align*}
	We immediately obtain 
	$r(y)+r(z)=b_1$,
	$r(x)+r(z)=b_2$ and 
	$r(x)+r(y)=b_3$.
	Now, let $r(w)=\frac{a_1+a_2-b_3}{2}$.
	Thus
	\begin{equation*}
		\begin{aligned}
			&r(w)+r(x)  = \frac{a_1+a_2-b_3}{2}+\frac{b_2+b_3-b_1}{2}=a_1\ \ \text{since $a_1+b_1=a_2+b_2$},\\
			&r(w)+r(y)  = \frac{a_1+a_2-b_3}{2}+\frac{b_1+b_3-b_2}{2}=a_2\ \ \text{since $a_1+b_1=a_2+b_2$},\\
			&r(w)+r(z)  = \frac{a_1+a_2-b_3}{2}+\frac{b_1+b_2-b_3}{2}=a_3\ \ \text{since $a_1+b_1=a_2+b_2=a_3+b_3$}.
		\end{aligned}
	\end{equation*} 
		
	For $(ii)\Leftrightarrow(iii)$, the proof is similar.		
\end{proof}
The following key lemma for the lower bounds for the threshold number, determines the maximum numbers of $K_4$'s and parts in terms of the number of colors.  
The crux of the proof is an observation that each $K_4$ must contain a particular kind of $K_3$.
\begin{lem} \label{lemK4}
	\begin{enumerate}[(i)]	
		\item If there are at most $m$ colors of edges in $nK_4$, then $n\leq  m+
		\binom{\left\lfloor m/2\right\rfloor}{3}+
		\binom{\left\lceil m/2\right\rceil}{3}$.
		In particular, if $nK_4$ is a $k$-threshold graph, then $n\leq  \left\lceil{\frac{k}{2}}\right\rceil+
		\binom{\left\lfloor (k+1)/4\right\rfloor}{3}+
		\binom{\left\lceil k/4\right\rceil}{3}$.
		\item If there are at most $m$ colors of nonedges in $K_{n\times4}$, then $n\leq  m+
		\binom{\left\lfloor m/2\right\rfloor}{3}+
		\binom{\left\lceil m/2\right\rceil}{3}$.	
		In particular, if $K_{n\times4}$ is a $k$-threshold graph, then $n\leq  \left\lceil{\frac{k+1}{2}}\right\rceil+
		\binom{\left\lfloor(k+2)/4\right\rfloor}{3}+
		\binom{\left\lceil(k+1)/4\right\rceil}{3}$.	
	\end{enumerate}
\end{lem}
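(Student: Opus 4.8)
The plan is to prove $(i)$ and deduce $(ii)$ by the usual edge/non-edge interchange, using the $E_4$-clause of Proposition~\ref{assign} and Lemma~\ref{KeyLowerBound}$(ii)$ in place of their edge versions. So fix a $\left(\theta_1,\dots,\theta_k\right)$-representation of $nK_4$ with $\theta_1<\cdots<\theta_k$ and let $m$ be the number of edge colours (hence $m\le\lceil k/2\rceil$). The first, routine, observation is monotonicity of colour in rank sum: if an edge with rank sum $\sigma$ has colour $c$ and another edge has rank sum $\sigma'\ge\sigma$, then its colour is at least $c$, because the $\theta_i$ increase.

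The heart of the argument is to extract from every copy of $K_4$ a single triangle whose three colours all lie in the ``lower half'' $L=\{1,\dots,\lfloor m/2\rfloor\}$ or all lie in the ``upper half'' $H=\{\lfloor m/2\rfloor+1,\dots,m\}$ of the colour set. Here is how I would do this. Fix a $K_4$, whose four vertices have total rank $S$; its three perfect matchings partition the six edge rank sums into three pairs, each summing to $S$ by Proposition~\ref{assign}. Relabelling, write the pairs as $\{a_i,b_i\}$ with $a_i\le b_i$ and $a_1\le a_2\le a_3$, so the six rank sums listed increasingly are $a_1\le a_2\le a_3\le b_3\le b_2\le b_1$. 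By monotonicity the six colours listed increasingly, $c_1\le\cdots\le c_6$, are matched to these rank sums so that the $a_i$-edge has colour $c_i$ and the $b_i$-edge has colour $c_{7-i}$. Each of the four triangles of the $K_4$ uses exactly one edge of each matching, and I would check by a short case analysis that the colour sets of the four triangles are, depending on a parity, either $\{c_4,c_5,c_6\},\{c_1,c_2,c_4\},\{c_1,c_3,c_5\},\{c_2,c_3,c_6\}$, or $\{c_1,c_2,c_3\},\{c_1,c_4,c_5\},\{c_2,c_4,c_6\},\{c_3,c_5,c_6\}$. In the first case: if $c_4>\lfloor m/2\rfloor$ take the triangle $\{c_4,c_5,c_6\}\subseteq H$, otherwise $\{c_1,c_2,c_4\}\subseteq L$; in the second case: if $c_3\le\lfloor m/2\rfloor$ take $\{c_1,c_2,c_3\}\subseteq L$, otherwise $\{c_3,c_5,c_6\}\subseteq H$. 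This produces the desired triangle; fix one for each of the $n$ copies of $K_4$.

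Then I would count. Distinct chosen triangles lie in distinct copies of $K_4$, so there is no edge between them; the proof of Lemma~\ref{NoTwoSameColor}$(i)$ (which uses only the absence of such edges) shows the chosen triangles have pairwise distinct colour multisets, and, combined with this distinctness, the proof of Lemma~\ref{KeyLowerBound}$(i)$ shows that for each colour $i$ at most one chosen triangle has colour multiset $\{i,j,j\}$ for some $j$. Since a non-rainbow triangle has a colour multiset $\{i,j,j\}$ with $i$ (the colour of odd multiplicity) uniquely determined, at most $m$ chosen triangles are non-rainbow; and each rainbow chosen triangle is a $3$-subset of $L$ or of $H$, with all these sets distinct, so there are at most $\binom{\lfloor m/2\rfloor}{3}$ of the former and at most $\binom{\lceil m/2\rceil}{3}$ of the latter. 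Hence $n\le m+\binom{\lfloor m/2\rfloor}{3}+\binom{\lceil m/2\rceil}{3}$. For the ``in particular'' clause I would put $m=\lceil k/2\rceil$ and use the identities $\lfloor\lceil k/2\rceil/2\rfloor=\lfloor(k+1)/4\rfloor$ and $\lceil\lceil k/2\rceil/2\rceil=\lceil k/4\rceil$ (and analogously $m=\lceil(k+1)/2\rceil$ for part $(ii)$).

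The main obstacle is the case analysis giving the two lists of colour sets of the four triangles of a $K_4$ — this is the ``particular kind of $K_3$'' in each $K_4$. I would handle it by coordinatising: for each matching record which of its two vertex-pairs carries the $a_i$-edge; the triangle omitting a vertex $v$ uses the $a_i$-edge of matching $i$ exactly when $v$ is not on it, so the three-bit vectors ``uses the $a$-edge'' of the four triangles are the complements of the four vertex profiles, and one checks these are either the four even-weight or the four odd-weight vectors of $\{0,1\}^3$, according to a parity. Reading off colours ($c_i$ when the $a$-edge of $M_i$ is used, $c_{7-i}$ otherwise) then gives the two displayed lists, and the rest is bookkeeping together with Lemmas~\ref{NoTwoSameColor} and~\ref{KeyLowerBound}.
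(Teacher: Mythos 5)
Your proposal is correct and follows essentially the same strategy as the paper: extract from each $K_4$ a triangle whose colours all lie in $\{1,\dots,\lfloor m/2\rfloor\}$ or all in $\{\lfloor m/2\rfloor+1,\dots,m\}$, then bound the non-rainbow chosen triangles by $m$ via Lemma~\ref{KeyLowerBound} and the rainbow ones by $\binom{\lfloor m/2\rfloor}{3}+\binom{\lceil m/2\rceil}{3}$ via Lemma~\ref{NoTwoSameColor}. The only notable difference is that the paper obtains the key triangle far more directly---sorting the vertices so that $r_{v_1}\le r_{v_2}\le r_{v_3}\le r_{v_4}$, the triangle $v_1v_2v_3$ has all edge rank sums at most $r_{v_2}+r_{v_3}$ and $v_2v_3v_4$ has all at least $r_{v_2}+r_{v_3}$, so one of the two qualifies according to the colour of $v_2v_3$---which replaces your matching/parity case analysis.
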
 

\begin{proof}
	We will only prove $(i)$ as the proof of $(ii)$ is similar.
	Let $r$ be a representation of $nK_4$ such that there are at most $m$ colors of edges.
	We decompose $nK_4$ into two subgraphs $G_1=n_1K_4$ and $G_2=n_2K_4$ with $n=n_1+n_2$ such that $G_1$ consists of all $K_4$'s containing a triangle whose edges are colored with $1$ or $2$ colors and $G_2$ consists of all $K_4$'s with four triangles whose edges are colored with $3$ colors.
	First, we show that $n_1\leq m$.  
	Consider a subgraph $n_1K_3$ of $G_1$ consisting of triangles whose edges are colored with $1$ or $2$ colors.
	Applying Lemma~\ref{KeyLowerBound} with the representation $r$ of $n_1K_3$, for each color $i\in[m]$, there is at most one triangle in $n_1K_3$ of color of the form $i j j$ where $j\in [m]$.
	Thus $n_1\leq m$.

	It remains to show that $n_2\leq\binom{\left\lfloor m/2\right\rfloor}{3}+
	\binom{\left\lceil m/2\right\rceil}{3}$. 
	Let $\mathcal{L}$ be the set of triangles in $nK_4$ of colors $ij\ell$ where $i, j, \ell\in \left\lbrace 1, 2,\dots, \left\lfloor\frac{m}{2}\right\rfloor\right\rbrace$ are all distinct and 
	let $\mathcal{U}$ be the set of triangles in $nK_4$ of colors $ij\ell$ where $i, j, \ell\in \left\lbrace \left\lfloor\frac{m}{2}\right\rfloor+1, \left\lfloor\frac{m}{2}\right\rfloor+2,\dots, m \right\rbrace$ are all distinct.
	
	\begin{claim}
		Each $K_4$ in $G_2$ contains at least one triangle in $\mathcal{L}\cup \mathcal{U}$.
	\end{claim}
	\begin{proof}[Proof of Claim]
		Let $\left\lbrace v_1, v_2, v_3, v_4 \right\rbrace$ be the vertex set of $K_4$.
		Suppose without loss of generality that $r_{v_1}\leq r_{v_2}\leq r_{v_3}\leq r_{v_4}$.
		Then 
		\begin{equation*}
			r_{v_1}+r_{v_2}\leq r_{v_1}+r_{v_3}\leq 
			r_{v_2}+r_{v_3}\leq
			r_{v_2}+r_{v_4}\leq 
			r_{v_3}+r_{v_4}.
		\end{equation*}
		Let $v_2v_3$ have color $i$.
		Thus $i$ is in either
		$\left\lbrace 1, 2,\dots, \left\lfloor\frac{m}{2}\right\rfloor\right\rbrace$ or 
		$\left\lbrace \left\lfloor\frac{m}{2}\right\rfloor+1, \left\lfloor\frac{m}{2}\right\rfloor+2,\dots, m \right\rbrace$, and hence either $v_1v_2v_3$ is in $\mathcal{L}$ or $v_2v_3v_4$ is in $\mathcal{U}$.
	\end{proof}
	
	Consider a subgraph $n_2K_3$ of $G_2$ consisting of triangles in $\mathcal{L}\cup\mathcal{U}$ which exists by Claim.
	Applying Lemma~\ref{NoTwoSameColor} with the representation $r$ of $n_2K_3$, no two triangles in $n_2K_3$ have the same color.
	Thus 
	\begin{equation*}
		n_2\leq \left|\mathcal{L}\cup \mathcal{U}\right|\leq \binom{\left\lfloor \frac{m}{2} \right\rfloor}{3}+\binom{\left\lceil \frac{m}{2} \right\rceil}{3}.
	\end{equation*}
	
	Observe that if $nK_4$ is a $k$-threshold graph, then there are at most $\left\lceil\frac{k}{2}\right\rceil$ colors of edges in $nK_4$, and
	hence
	\begin{equation*}
		n\leq 
		\left\lceil{\frac{k}{2}}\right\rceil+
		\binom{\left\lfloor \frac{\left\lceil k/2\right\rceil}{2} \right\rfloor}{3}+\binom{\left\lceil \frac{\left\lceil k/2\right\rceil}{2}\right\rceil}{3}
		=
		\left\lceil{\frac{k}{2}}\right\rceil+
		\binom{\left\lfloor \frac{k+1}{4}\right\rfloor}{3}+
		\binom{\left\lceil \frac{k}{4}\right\rceil}{3}.\qedhere
	\end{equation*}
\end{proof}
Similarly to the case of $K_3$, the lower bounds for the threshold numbers obtained from Lemma~\ref{lemK4} are not sharp. We again need another observation which says roughly that the first and last colors appear in at most
one $K_4$ or part.
\begin{lem} \label{nocolorK4} 
	\begin{enumerate}[(i)]
		\item In a $\left(\theta_1, \theta_2,\dots, \theta_{2m-1}\right)$-representation of $nK_4$, an edge of color $m$ appears in at most one $K_4$.
		\item In a $\left(\theta_1, \theta_2,\dots, \theta_m\right)$-representation of $K_{n\times4}$, a nonedge of color $1$ appears in at most one part.
		\item In a $\left(\theta_1, \theta_2,\dots, \theta_{2m}\right)$-representation of $K_{n\times4}$, a nonedge of color $m+1$ appears in at most one part.
	\end{enumerate}
\end{lem}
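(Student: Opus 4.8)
The plan is to recycle the proof of Lemma~\ref{nocolorK3} essentially word for word, since that argument never used the fact that the components in question were triangles: it only used that two vertices in distinct components of $nK_3$ form a non-edge, and (in the multipartite version) that two vertices in distinct parts of $K_{n\times3}$ form an edge. Replacing $K_3$ by $K_4$ throughout changes nothing, because once we have fixed one vertex of an extreme-coloured edge/non-edge inside each of two distinct copies of $K_4$ (resp.\ two distinct parts), the sizes of the other components are irrelevant.

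For part $(i)$, suppose for contradiction that two distinct copies of $K_4$ in $nK_4$, say $Q_x$ and $Q_y$, each contain an edge of colour $m$. Since the representation uses $2m-1$ thresholds, colour $m$ is the top colour, so an edge $uv$ of colour $m$ satisfies $r_u+r_v\geq\theta_{2m-1}$, and hence at least one of its endpoints has rank at least $\theta_{2m-1}/2$. Let $x_1$ be such an endpoint in $Q_x$ and $y_1$ such an endpoint in $Q_y$. Then $r_{x_1}+r_{y_1}\geq\theta_{2m-1}$, so all $2m-1$ thresholds are at most $r_{x_1}+r_{y_1}$; as $2m-1$ is odd, $x_1y_1$ is an edge of $nK_4$, contradicting the fact that $x_1$ and $y_1$ lie in different copies of $K_4$.

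For part $(ii)$, with only $m$ thresholds colour $1$ is the bottom colour, so a non-edge $xy$ of colour $1$ satisfies $r_x+r_y<\theta_1$ (recall $\theta_0=-\infty$), and hence at least one endpoint has rank less than $\theta_1/2$. If two distinct parts each contained a colour-$1$ non-edge, picking such an endpoint $x_1$ in one part and $y_1$ in the other gives $r_{x_1}+r_{y_1}<\theta_1$, so no threshold is at most $r_{x_1}+r_{y_1}$; since $0$ is even, $x_1y_1$ is a non-edge of $K_{n\times4}$, contradicting the completeness of $K_{n\times4}$ between distinct parts. Part $(iii)$ is handled exactly as part $(i)$: with $2m$ thresholds colour $m+1$ is the top colour, a colour-$(m+1)$ non-edge has rank sum at least $\theta_{2m}$, and an endpoint of rank at least $\theta_{2m}/2$ taken from each of two distinct parts would force $x_1y_1$ to be a non-edge (all $2m$ thresholds being at most the sum, an even count) between those parts, a contradiction.

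The only points requiring a little care — and they are genuinely minor — are pinning down which colour is extreme from the number of thresholds in each of the three cases (colour $m$ for $nK_4$ with $2m-1$ thresholds, colours $1$ and $m+1$ for $K_{n\times4}$ with $m$ and $2m$ thresholds respectively) and being consistent about strict versus non-strict inequalities when passing from a rank sum $r_u+r_v$ to $\max\{r_u,r_v\}$ or $\min\{r_u,r_v\}$. I do not expect any real obstacle here; the statement is a direct transplant of Lemma~\ref{nocolorK3}.
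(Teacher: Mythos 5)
Your proposal is correct and matches the paper exactly: the paper's proof of Lemma~\ref{nocolorK4} consists of the single remark that it is similar to the proof of Lemma~\ref{nocolorK3}, which is precisely the extreme-colour/half-threshold argument you transplant. The parity bookkeeping in each of your three cases (all $2m-1$ thresholds below the sum, zero thresholds, all $2m$ thresholds) is also handled correctly.
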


\begin{proof}
	The proof is similar to that of Lemma~\ref{nocolorK3}.
\end{proof}

The upper bounds for the threshold numbers will be obtained from rank assignments of the following forms.
Let $A=\left\lbrace a_1, a_2, \dots, a_M \right\rbrace$ and 
$B=\left\lbrace b_1, b_2, \dots, b_M \right\rbrace$
be such that $a_i+b_i=N$ for all $i\in[M]$.
For $n=2M+2\binom{M}{3}$,
the \emph{$\left(A, B\right)$-assignment} is the rank assignment of $nK_4$ consisting of the following $K_4$'s: 
\begin{align*}
	&K_4\left(a_i\right) \ \text{for each} \  i\in[M],\\
	&K_4\left(b_i\right) \ \text{for each} \  i\in[M],\\
	&K_4\left(a_i, b_i, a_j, b_j, a_k, b_k\right) \ \text{for each subset}  \left\lbrace i, j, k\right\rbrace \subset[M] \ \text{of size} \ 3, \\
	&K_4\left(a_i, b_i, a_j, b_j, b_k, a_k\right) \ \text{for each subset}  \left\lbrace i, j, k\right\rbrace \subset[M] \ \text{of size} \ 3
\end{align*}
where each of them appears exactly once.
Note that the numbers of $K_4$'s in each line are $M$, $M$, $\binom{M}{3}$, $\binom{M}{3}$ respectively, and they exist by Proposition~\ref{assign}.

Let $\varepsilon>0$. 
For $n=2M+1+\binom{M}{3}+\binom{M+1}{3}$,
the \emph{$\left(A, B, \varepsilon\right)$-assignment} is the rank assignment of $nK_4$ consisting of the following $K_4$'s:
\begin{align*}
	&K_4\left(a_i\right)\ \text{for each} \  i\in[M],\\
	&K_4\left(b_i\right)\ \text{for each} \  i\in[M],\\
	&K_4\left(a_i, b_i, a_j, b_j, a_k, b_k\right) \ \text{for each subset}  \left\lbrace i, j, k\right\rbrace \subset[M] \ \text{of size} \ 3,\\
	&K_4\left(a_i, b_i, a_j, b_j, b_k, a_k\right) \ \text{for each subset}  \left\lbrace i, j, k\right\rbrace \subset[M] \ \text{of size} \ 3,\\
	&K_4\left(\frac{N}{2}+\varepsilon\right),\\ 
	&K_4\left(a_i+\varepsilon, b_i+\varepsilon, a_j+\varepsilon, b_j+\varepsilon, \frac{N}{2}+\varepsilon, \frac{N}{2}+\varepsilon\right) \ \text{for each subset}  \left\lbrace i, j\right\rbrace \subset[M] \ \text{of size} \ 2
\end{align*}
where each of them appears exactly once.
Note that the numbers of $K_4$'s in each line are $M$, $M$, $\binom{M}{3}$, $\binom{M}{3}$, $1$, $\binom{M}{2}$ respectively, and they exist by Proposition~\ref{assign}. 

Occasionally, we say that a $K_4$
is of \emph{type I} if it is a $K_4\left(a_i\right)$ or $K_4\left(b_i\right)$ for some $i\in [M]$,
\emph{type II} if it is a $K_4\left(a_i, b_i, a_j, b_j, a_k, b_k\right)$ or $K_4\left(a_i, b_i, a_j, b_j, b_k, a_k\right)$ for some subset $\left\lbrace i, j, k\right\rbrace \subset [M]$ of size  $3$, 
\emph{type III} if it is a $K_4\left(\frac{N}{2}+\varepsilon\right)$,
\emph{type IV} if it is a $K_4\left(a_i+\varepsilon, b_i+\varepsilon, a_j+\varepsilon, b_j+\varepsilon, \frac{N}{2}+\varepsilon, \frac{N}{2}+\varepsilon\right)$  for some distinct $i, j \in [M]$.
In the same fasion, we can define the \emph{$\left(A, B\right)$-assignment} and the \emph{$\left(A, B, \varepsilon\right)$-assignment} of $K_{n\times4}$ by replacing $K_4$ with $E_4$. 

%
%

The following lemma will be used repeatedly in the proof of Lemma~\ref{EnotNEK4}.
\begin{lem} \label{sumeven}
	Let $\left\lbrace N, a_1, a_2, \dots, a_M \right\rbrace\subset\mathbb{R}$ be a linearly independent set over $\mathbb{Q}$ and 
	$b_i=N-a_i$ for $i=1, 2, \dots, M$. 
	Let $A=\left\lbrace a_1, a_2, \dots, a_M \right\rbrace$ and 
	$B=\left\lbrace b_1, b_2, \dots, b_M \right\rbrace$.
	If 
	\begin{equation*}
	\sum_{i=1}^{S}\alpha_ix_i + \beta N = 0 
	\end{equation*}
where $\alpha_i \in \mathbb{Z}$, 
$x_i\in A \cup B$ for all $i\in [S]$ and $\beta \in \mathbb{Q}$, then
$\sum_{i=1}^{S}\alpha_i$ is even.
\end{lem}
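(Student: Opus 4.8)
The plan is to work with the linear dependence relation and reduce everything to a statement about the rational vector space spanned by $\{N, a_1,\dots,a_M\}$, exploiting that $b_i = N - a_i$. First I would collect, for each index $\ell\in[M]$, all terms of the sum $\sum_{i=1}^S \alpha_i x_i$ whose variable $x_i$ is either $a_\ell$ or $b_\ell$. Say $a_\ell$ occurs with total integer coefficient $p_\ell$ (summing the $\alpha_i$ over those $i$ with $x_i=a_\ell$) and $b_\ell$ occurs with total integer coefficient $q_\ell$. Then, using $b_\ell = N - a_\ell$, the contribution of index $\ell$ to the left-hand side is $p_\ell a_\ell + q_\ell(N-a_\ell) = (p_\ell-q_\ell)a_\ell + q_\ell N$. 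Hence the whole relation rewrites as
\begin{equation*}
\sum_{\ell=1}^M (p_\ell - q_\ell)\, a_\ell \;+\; \Bigl(\beta + \sum_{\ell=1}^M q_\ell\Bigr) N \;=\; 0 .
\end{equation*}

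Next I would invoke the hypothesis that $\{N, a_1,\dots,a_M\}$ is linearly independent over $\mathbb{Q}$: every coefficient in the displayed relation must vanish. In particular $p_\ell - q_\ell = 0$ for each $\ell\in[M]$, i.e. $p_\ell = q_\ell$. Now the total of all the coefficients $\alpha_i$ is, by the grouping, exactly $\sum_{i=1}^S \alpha_i = \sum_{\ell=1}^M (p_\ell + q_\ell)$. Since $p_\ell = q_\ell$ for each $\ell$, this equals $\sum_{\ell=1}^M 2q_\ell$, which is even. That completes the argument.

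The only mildly delicate point — and the place where I would be careful rather than where any real difficulty lies — is the bookkeeping: each $x_i$ is a single element of $A\cup B$, so different indices $i$ may repeat the same variable, and one must be sure to aggregate coefficients per variable before substituting $b_\ell = N-a_\ell$, and to track the $N$-terms that this substitution generates (they land in the coefficient of $N$, together with the given $\beta$). There is no genuine obstacle here; the whole lemma is a one-line consequence of linear independence once the substitution is organized correctly.
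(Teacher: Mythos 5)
Your argument is correct and is essentially the same as the paper's: both substitute $b_\ell = N - a_\ell$, use linear independence of $\{N, a_1,\dots,a_M\}$ to force the net $a_\ell$-coefficients to vanish, and then observe that $\sum_i \alpha_i$ equals twice the total coefficient attached to $B$-elements. The only cosmetic difference is that you group coefficients per index $\ell$ (getting $p_\ell = q_\ell$) while the paper works with signs $\delta_i \in \{-1,1\}$ and the aggregate identity $\sum_i \delta_i \alpha_i = 0$.
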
 

\begin{proof}
Suppose that 
$\sum_{i=1}^{S}\alpha_ix_i + \beta N = 0 $ where 
$\alpha_i \in \mathbb{Z}$, 
$x_i\in A \cup B$ for all $i\in [S]$ and $\beta \in \mathbb{Q}$.
Observe that $x_i$ is either $a_{j_i}$ or $b_{j_i}=N-a_{j_i}$ where $j_i\in [M]$.
Then we can write 
$x_i=\delta_ia_{j_i}+ \beta_i N$
where 
$\delta_i\in \left\lbrace -1, 1 \right\rbrace$ and $\beta_i\in \left\lbrace 0, 1 \right\rbrace$.
The equation becomes
\begin{equation*}
	\sum_{i=1}^{S}\delta_i\alpha_ia_{j_i} + \sum_{i=1}^{S}\beta_i\alpha_i N + \beta N = 0.
\end{equation*}
Since $\left\lbrace N, a_1, a_2, \dots, a_M \right\rbrace$ is linearly independent over $\mathbb{Q}$, 
we have $\sum_{i=1}^{S}\delta_i\alpha_i=0$.
Hence, 
\begin{equation*}
	\sum_{i=1}^{S}\alpha_i = \sum_{i=1}^{S}\delta_i\alpha_i +
	2\sum_{\delta_i=-1}\alpha_i = 2\sum_{\delta_i=-1}\alpha_i
\end{equation*}
is even.
\end{proof}
The linear independence of $\left\lbrace N, a_1, a_2, \dots, a_M \right\rbrace$ over $\mathbb{Q}$ is a sufficient condition for the edge and nonedge rank sums in the $\left(A, B\right)$-assignment and in the $\left(A,  B, \varepsilon\right)$-assignment not to coincide.
For the $\left(A,  B, \varepsilon\right)$-assignment, we prove further that there are small intervals without nonedge rank sums that cover all edge rank sums.
\begin{lem} \label{EnotNEK4}
Let $\left\lbrace N, a_1, a_2, \dots, a_M \right\rbrace\subset\mathbb{R}$ be a linearly independent set over $\mathbb{Q}$ and $b_i=N-a_i$ for $i=1, 2, \dots, M$. 
Let $A=\left\lbrace a_1, a_2, \dots, a_M \right\rbrace$ and $B=\left\lbrace b_1, b_2, \dots, b_M\right\rbrace$.
\begin{enumerate}[(i)]
\item Let $n=2M+2\binom{M}{3}$. In the $\left(A, B\right)$-assignment of $nK_4$, the edge and nonedge rank sums do not coincide.

\item Let $n=2M+1+\binom{M}{3}+\binom{M+1}{3}$. 
Then there exists a positive real number  $\varepsilon$ such that, in the $\left(A,  B, \varepsilon\right)$-assignment of $nK_4$, no nonedge rank sum lies in either  $\left[a_i, a_i+\varepsilon\right]$, 
$\left[b_i, b_i+\varepsilon \right]$
or $\left\lbrace \frac{N}{2}+\varepsilon \right\rbrace$ for all $i\in[M]$.
Moreover, the sets of the form $\left[a_i, a_i+\varepsilon\right]$, 
$\left[b_i, b_i+\varepsilon \right]$
and $\left\lbrace \frac{N}{2}+\varepsilon \right\rbrace$ for all $i\in[M]$ are pairwise disjoint.

\item Let $n=2M+2\binom{M}{3}$. In the $\left(A, B\right)$-assignment of $K_{n\times4}$, the edge and nonedge rank sums do not coincide.

\item Let $n=2M+1+\binom{M}{3}+\binom{M+1}{3}$. 
Then there exists a positive real number  $\varepsilon$ such that, in the $\left(A, B, \varepsilon\right)$-assignment of $K_{n\times4}$, no edge rank sum lies in either  $\left[a_i, a_i+\varepsilon\right]$, 
$\left[b_i, b_i+\varepsilon \right]$
or $\left\lbrace \frac{N}{2}+\varepsilon \right\rbrace$ for all $i\in[M]$.
Moreover, the sets of the form $\left[a_i, a_i+\varepsilon\right]$, 
$\left[b_i, b_i+\varepsilon \right]$
and $\left\lbrace \frac{N}{2}+\varepsilon \right\rbrace$ for all $i\in[M]$ are pairwise disjoint.
\end{enumerate}
\end{lem}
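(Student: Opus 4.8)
The plan is to prove all four parts by the same mechanism: reduce every claimed coincidence or overlap to a $\mathbb{Q}$-linear relation among $\{N, a_1,\dots,a_M\}$, and then invoke Lemma~\ref{sumeven} to derive a parity contradiction on the coefficient sum. Since parts $(iii)$ and $(iv)$ are the $E_4$-analogues of $(i)$ and $(ii)$ with edges and nonedges swapped (and all the rank-sum formulas from Proposition~\ref{assign} are identical for $K_4$ and $E_4$), it suffices to carry out $(i)$ and $(ii)$ in detail and remark that the other two are verbatim the same argument.

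For $(i)$, I would first record the shape of every rank in the $(A,B)$-assignment. By the construction of $K_4(a_i)$, $K_4(b_i)$, $K_4(a_i,b_i,a_j,b_j,a_k,b_k)$ and $K_4(a_i,b_i,a_j,b_j,b_k,a_k)$, together with the explicit ranks produced in the proof of Proposition~\ref{assign}, every vertex rank is a half-integer combination $\tfrac12(\pm x \pm y \pm z)$ with $x,y,z\in A\cup B$ (for type~II, using $a_i+b_i=N$ lets one write $r(w)=\tfrac{a_i+a_j-b_k}{2}$ etc., so still a $\pm$-combination of three elements of $A\cup B$; for type~I all three are equal). Hence an edge or nonedge rank sum of two vertices from different $K_4$'s has the form $\tfrac12\sum_{i=1}^{6}\epsilon_i x_i$ with $\epsilon_i\in\{-1,1\}$ and $x_i\in A\cup B$ — that is, a sum with an \emph{even} number ($6$) of $\pm1$ coefficients. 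A nonedge rank sum is a sum of two vertices from the \emph{same} $K_4$, which by construction equals one of $a_i,b_i$ or (for type~II, a non-matching pair) something like $\tfrac12(a_i - a_j + \ldots)$; in every case it is also a half-sum of an even number of $\pm1$-weighted elements of $A\cup B$. Wait — the cleaner route is: an edge rank sum is literally one of the prescribed values $a_i,b_i$ (or equal values for type~I), hence of the form $\sum \alpha_\ell x_\ell + \beta N$ with $\sum\alpha_\ell$ \emph{odd} (just $1$); a nonedge rank sum is a sum of two vertex ranks lying in the same $K_4$ but \emph{not} forming a prescribed matching edge, and one computes directly that such a sum always has coefficient-sum parity different from that of any single prescribed $a_i$ or $b_i$. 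Setting an edge rank sum equal to a nonedge rank sum then yields $\sum_{\ell}\alpha_\ell x_\ell + \beta N = 0$ with $\sum_\ell \alpha_\ell$ odd, contradicting Lemma~\ref{sumeven}.

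For $(ii)$, the statement has two halves. The pairwise-disjointness of the sets $[a_i,a_i+\varepsilon]$, $[b_i,b_i+\varepsilon]$, $\{\tfrac N2+\varepsilon\}$ is immediate once $\varepsilon$ is chosen smaller than every pairwise gap among the finitely many numbers $\{a_i\}\cup\{b_i\}\cup\{\tfrac N2\}$ (these are distinct by linear independence of $\{N,a_1,\dots,a_M\}$ over $\mathbb Q$). The substantive half is that no nonedge rank sum of the $(A,B,\varepsilon)$-assignment falls in any of these sets. I would split the nonedge rank sums by the types of the two $K_4$'s involved: if both vertices lie in a single $K_4$ of type I or II this reduces to $(i)$; if one or both vertices lie in a type~III or type~IV $K_4$ (whose ranks involve $+\varepsilon$ shifts of the form $\tfrac12(\cdots)+c\varepsilon$ with $c\in\{0,\tfrac12,1,\tfrac32\}$ a half-integer), then a nonedge rank sum lies in, say, $[a_i, a_i+\varepsilon]$ only if its "$\varepsilon$-free part" equals $a_i$ and its $\varepsilon$-coefficient lands in $[0,1]$. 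The $\varepsilon$-free part equation is again a $\mathbb Q$-linear relation among $\{N,a_1,\dots,a_M\}$ to which Lemma~\ref{sumeven} applies, giving the parity contradiction exactly as before, \emph{except} for the finitely many "honest" coincidences built into the construction — and those are precisely the edge rank sums, which by design lie at the left endpoints $a_i, b_i$ or at $\tfrac N2+\varepsilon$, not in the interiors as nonedge sums. One must also check the $\varepsilon$-coefficient never accidentally places a would-be-equal sum inside the interval; shrinking $\varepsilon$ handles every near-miss since there are finitely many rank sums.

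The main obstacle is the bookkeeping in $(ii)$ (and $(iv)$): because the type~III and type~IV $K_4$'s carry $\varepsilon$-shifts with half-integer multipliers, one has to be careful that combining a type~IV vertex with a type~II vertex cannot produce a nonedge rank sum whose $\varepsilon$-free part coincidentally equals some $a_i$ \emph{and} whose $\varepsilon$-coefficient lies in $[0,1]$; ruling this out cleanly requires separating the parity argument (which kills the $\varepsilon$-free coincidence whenever the coefficient sum is odd) from a direct case check on the finitely many configurations where the $\varepsilon$-free parts \emph{can} match. I expect the write-up to isolate a short sublemma: "any rank sum in the $(A,B,\varepsilon)$-assignment equals $\tfrac12\sum\alpha_\ell x_\ell + \tfrac{\beta}{2}N + c\varepsilon$ with $c$ a fixed half-integer, and it is an edge rank sum iff $c\in\{0,1\}$ matches a prescribed value; otherwise $\sum\alpha_\ell$ is odd" — after which Lemma~\ref{sumeven} and the choice of $\varepsilon$ finish everything uniformly.
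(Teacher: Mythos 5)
There is a genuine gap, and it sits at the heart of your proposed mechanism. You claim that equating an edge rank sum $e_t\in A\cup B$ with a nonedge rank sum always produces a relation $\sum_\ell\alpha_\ell x_\ell+\beta N=0$ with $\sum_\ell\alpha_\ell$ odd, so that Lemma~\ref{sumeven} finishes every case. This is false precisely in the main cases, namely when both endpoints of the nonedge lie in type I or type II $K_4$'s. There each rank is $\frac{1}{2}(\pm x\pm y\pm z)$ with an \emph{odd} number (one or three) of $\pm1$-weighted elements of $A\cup B$, so the nonedge rank sum contributes six such terms and the equation $c_i+c_j-c_k+d_p+d_q-d_s=2e_t$ has coefficient sum $1+1-1+1+1-1-2=0$, which is even; Lemma~\ref{sumeven} is vacuous here. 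The paper's proof does not use the parity lemma for these cases at all: it argues by exhibiting an index that occurs in exactly one of the two triples (so some $a_i$ appears as a lone term, contradicting linear independence), and when the two index sets coincide (a vertex of $K_4(a_i,b_i,a_j,b_j,a_k,b_k)$ against a vertex of $K_4(a_i,b_i,a_j,b_j,b_k,a_k)$) it needs a structural case analysis showing that either none or exactly two of $c_\ell=d_\ell$ hold, reducing the equation to $N=2e_t$ or $N+2c_j-2c_k=2e_t$ before linear independence applies. Your proposal contains no substitute for this analysis, and the parity lemma cannot supply one. (The parity argument is genuinely used only in the cases where one endpoint lies in a type III or IV $K_4$, because the $N/4$ or $N/2$ term then forces an odd number of $A\cup B$ summands.)

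Two further points. First, you have the roles of edges and nonedges in $nK_4$ reversed in places: in $nK_4$ the edges are the pairs inside a single $K_4$ (their rank sums are the prescribed values $a_i,b_i$), and the nonedges are the pairs joining two different $K_4$'s; the statement to be proved is that the latter sums avoid $A\cup B$. Second, your treatment of the singleton $\bigl\{\frac{N}{2}+\varepsilon\bigr\}$ in part $(ii)$ by ``shrinking $\varepsilon$'' does not suffice: $\frac{N}{2}$ \emph{is} a rank sum of the unperturbed assignment (the edge rank sum of the type III and IV $K_4$'s), so the distance from a nonedge sum to $\frac{N}{2}$ can be exactly $0$, and when both endpoints lie in type III/IV $K_4$'s one must rule out $r_x+r_y=\frac{N}{2}+\varepsilon$ by a separate linear-independence computation, as the paper does. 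Your overall architecture (prove the $\varepsilon=0$ claim, then perturb with $\varepsilon$ smaller than all gaps) matches the paper's, but the engine you propose for the $\varepsilon=0$ claim does not run.
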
 

\begin{proof}
For $(i)$ and $(ii)$, it is sufficient to prove $(ii)$ since every $K_4$ in the $\left(A, B\right)$-assignment appears in the $\left(A, B, \varepsilon\right)$-assignment and each edge rank sum in the $\left(A, B\right)$-assignment is either $a_i$ or $b_i$.
The proofs of $(iii)$ and $(iv)$ are similar to those of $(i)$ and $(ii)$.

To prove $(ii)$, let  $n=2M+1+\binom{M}{3}+\binom{M+1}{3}$.
We first consider the $\left(A, B, \varepsilon\right)$-assignment of $nK_4$ in the case when $\varepsilon=0$.

\begin{claim}
For the $\left(A, B, 0\right)$-assignment $r'$ of $nK_4$, no nonedge rank sum lies in $A\cup B$.
\end{claim}

\begin{proof}[Proof of Claim]
\begin{table}[b]
	\centering
	\begin{tabular}{|c||c|c|c|c|}
		\hline
		\diagbox[width=4em]{$x$}{$y$} & I $K_4$ & II $K_4$ & III $K_4$ & IV $K_4$\\
		\hline
		\hline
		I $K_4$ & Case 1 & Case 2 & Case 4 & Case 5 \\ 
		\hline
		II $K_4$ & \cellcolor{lightgray} & Case 3 & Case 6 & Case 7\\ 
		\hline
		III $K_4$ & \cellcolor{lightgray} & \cellcolor{lightgray} & Case 8 & Case 9\\ 
		\hline
		IV $K_4$ & \cellcolor{lightgray} & \cellcolor{lightgray} & \cellcolor{lightgray} & Case 10\\
		\hline
	\end{tabular}
	\caption{Ten cases according to the four possible types of $K_4$ that $x$ and $y$ are in.}
	\label{tabcase}
\end{table}	
Suppose to the contrary that there exists a nonedge $xy$ in $nK_4$ such that $r'_x+r'_y$ lies in $A\cup B$, 
say $r'_x+r'_y=e_t\in \left\lbrace a_t, b_t\right\rbrace$ for some $t\in[M]$.	
We divide into cases according to the four possible types of $K_4$ that $x$ and $y$ are in as shown in Table~\ref{tabcase}.

Observe that the rank of each vertex in a type I $K_4$  is of the form $\frac{c_i}{2}$ 
where $i\in[M]$ and $c_i\in \left\lbrace a_i, b_i \right\rbrace$, 
that in a type II $K_4$ is of the form $\frac{c_i+c_j-c_k}{2}$ where 
$ i, j, k\in[M]$ are all distinct and
$c_\ell\in \left\lbrace a_\ell, b_\ell \right\rbrace$ for  $\ell \in \left\lbrace i, j, k\right\rbrace$, 
that in a type III $K_4$ is of the form $\frac{N}{4}$,
and 
that in a type IV $K_4$ is of the form $\frac{c_i+c_j-N/2}{2}$ where 
$ i, j\in[M]$ are distinct and
$c_\ell\in \left\lbrace a_\ell, b_\ell \right\rbrace$ for  $\ell \in \left\lbrace i, j\right\rbrace$.

\noindent
\textbf{Case 1.} 
$x, y\in\text{type I} \ K_4$. 

Then $x\in K_4\left(c_i\right)$ and $y\in K_4\left(d_j\right)$ 
where  
$i, j\in[M]$ and
$c_i\in \left\lbrace a_i, b_i\right\rbrace$, 
$d_j\in \left\lbrace a_j, b_j\right\rbrace$.
Thus $r'_x=\frac{c_i}{2}$ and $r'_y=\frac{d_j}{2}$.
The equation $r'_x+r'_y=e_t$ becomes 
\begin{equation*}
	c_i+d_j=2e_t.
\end{equation*}
First, suppose that $i\neq j$.
One of $i$ or $j$ cannot equal to $t$, say $i\neq t$.
By writing the equation in terms of the basis $\left\lbrace N, a_1, a_2, \dots, a_M \right\rbrace$, we can see that
the equality cannot occur since $c_i$ is the only term in the equation involving $a_i$, a contradiction.
Now, suppose that $i=j$.
Since $x$ and $y$ are in different $K_4$'s, we have 
$\left\lbrace c_i, d_j \right\rbrace =\left\lbrace a_i, b_i \right\rbrace$, and hence the equation becomes $N=2e_t$, a contradiction.

\noindent
\textbf{Case 2.} 
$x\in\text{type I} \ K_4$ and $y\in\text{type II} \ K_4$.

Then $x\in K_4\left(c_i\right)$ where
$i\in[M]$ and $c_i\in \left\lbrace a_i, b_i \right\rbrace$, and 
$y$ is in either $K_4\left(a_p, b_p, a_q, b_q, a_s, b_s\right)$ or $K_4\left(a_p, b_p, a_q, b_q, b_s, a_s\right)$
where $p, q, s \in [M]$ are all distinct.
Thus
$r'_x=\frac{c_i}{2}$ and $r'_y=\frac{d_p+d_q-d_s}{2}$ where 
$d_\ell\in \left\lbrace a_\ell, b_\ell\right\rbrace$ for $\ell\in\left\lbrace p, q, s  \right\rbrace$.
The equation $r'_x+r'_y=e_t$ becomes 
\begin{equation*}
	c_i+d_p+d_q-d_s=2e_t.
\end{equation*}
Since $p, q, s$ are all distinct,
there is an index in $\left\lbrace p, q, s\right\rbrace$ not appearing in $\left\lbrace i, t\right\rbrace$, say $p\notin \left\lbrace i, t\right\rbrace$.
Thus the equality cannot occur since $d_p$ is the only term in the equation involving $a_p$, a contradiction.

\noindent
\textbf{Case 3.} 
$x, y\in\text{type II} \ K_4$.

Then $x$ is in either $K_4\left(a_i, b_i, a_j, b_j, a_k, b_k\right)$ or $K_4\left(a_i, b_i, a_j, b_j, b_k, a_k\right)$
where $i, j, k \in [M]$ are all distinct,
and 
$y$ is in either $K_4\left(a_p, b_p, a_q, b_q, a_s, b_s\right)$ or $K_4\left(a_p, b_p, a_q, b_q, b_s, a_s\right)$
where $p, q, s \in [M]$ are all distinct.
Thus
$r'_x=\frac{c_i+c_j-c_k}{2}$ where 
$c_\ell\in \left\lbrace a_\ell, b_\ell\right\rbrace$ for $\ell\in\left\lbrace i, j, k \right\rbrace$ and 
$r'_y=\frac{d_p+d_q-d_s}{2}$ where 
$d_\ell\in \left\lbrace a_\ell, b_\ell\right\rbrace$ for $\ell\in\left\lbrace p, q, s \right\rbrace$.
The equation $r'_x+r'_y=e_t$ becomes 
\begin{equation*}
	c_i+c_j-c_k+d_p+d_q-d_s=2e_t.
\end{equation*}

\noindent
\textbf{Case 3.1.} 
$\left\lbrace i, j, k \right\rbrace\neq\left\lbrace p, q, s \right\rbrace$. 

Then there is an index in $\left\lbrace i, j, k \right\rbrace$ not appearing in $\left\lbrace p, q, s \right\rbrace$, say 
$i\notin \left\lbrace p, q, s \right\rbrace$.
Similarly, there is an index in $\left\lbrace p, q, s \right\rbrace$ not appearing in $\left\lbrace i, j, k \right\rbrace$, say $p\notin\left\lbrace i, j, k \right\rbrace$.
One of $i$ or $p$ cannot equal to $t$, say $i\neq t$. 
Thus the equality cannot occur since $c_i$ is the only term in the equation involving $a_i$, a contradiction.

\begin{table}[t]
	\begin{subtable}[c]{0.5\textwidth}
		\centering
		\begin{tabular}{|c|c|c|}
			\hline
			$c_i$ & $c_j$ & $c_k$ \\ 
			\hline
			\hline
			$a_i$ & $b_j$ & $a_k$ \\ 
			\hline
			$b_i$ & $b_j$ & $b_k$ \\ 
			\hline
			$b_i$ & $a_j$ & $a_k$ \\ 
			\hline
			$a_i$ & $a_j$ & $b_k$ \\
			\hline
		\end{tabular}
		\subcaption{$K_4\left(a_i, b_i, a_j, b_j, a_k, b_k\right)$}
		\label{tab1}
	\end{subtable}
	\begin{subtable}[c]{0.5\textwidth}
		\centering
		\begin{tabular}{|c|c|c|}
			\hline
			$d_i$ & $d_j$ & $d_k$ \\ 
			\hline
			\hline
			$a_i$ & $b_j$ & $b_k$ \\ 
			\hline
			$b_i$ & $b_j$ & $a_k$ \\ 
			\hline
			$b_i$ & $a_j$ & $b_k$ \\ 
			\hline
			$a_i$ & $a_j$ & $a_k$ \\
			\hline
		\end{tabular}
		\subcaption{$K_4\left(a_i, b_i, a_j, b_j, b_k, a_k\right)$}
		\label{tab2}
	\end{subtable}
	\caption{The possible values of $c_i, c_j, c_k$ and $d_i, d_j, d_k$.}
\end{table}

\noindent
\textbf{Case 3.2.} 
$\left\lbrace i, j, k \right\rbrace=\left\lbrace p, q, s \right\rbrace$.

Without loss of generality, let $i=p$, $j=q$ and $k=s$.
Since $x$, $y$ are in different $K_4$'s, we can assume without loss of generality that 
$x\in K_4\left(a_i, b_i, a_j, b_j, a_k, b_k\right)$ and 
$y\in K_4\left(a_i, b_i, a_j, b_j, b_k, a_k\right)$.
By considering the edge rank sum of each triangle in $K_4\left(a_i, b_i, a_j, b_j, a_k, b_k\right)$, 
each row in Table~\ref{tab1} shows the possible values of $c_i, c_j, c_k$, 
and 
by considering the edge rank sum of each triangle in $K_4\left(a_i, b_i, a_j, b_j, b_k, a_k\right)$,
each row in Table~\ref{tab2} shows the possible values of $d_i, d_j, d_k$.

By comparing a row in Table~\ref{tab1} with a row in Table~\ref{tab2}, we observe that either none or two of $c_i=d_i, c_j=d_j, c_k=d_k$ hold.
If none holds, then 
$\left\lbrace c_i, d_i \right\rbrace=\left\lbrace a_i, b_i \right\rbrace, \left\lbrace c_j, d_j \right\rbrace=\left\lbrace a_j, b_j \right\rbrace$ and $\left\lbrace c_k, d_k \right\rbrace=\left\lbrace a_k, b_k \right\rbrace$.
Thus the above equation becomes
\begin{equation*}
	N+N-N=2e_t
\end{equation*}
which is a contradiction.
If two of $c_i=d_i, c_j=d_j, c_k=d_k$ hold, then
we assume without loss of generality that $\left\lbrace c_i, d_i \right\rbrace=\left\lbrace a_i, b_i \right\rbrace$ and
$c_j=d_j$,  $c_k=d_k$.
Thus the original equation becomes
\begin{equation*}
	N+2c_j-2c_k=2e_t.
\end{equation*}
Since $j\neq k$, one of $j$ or $k$ cannot equal to $t$, say $j\neq t$.
Hence, the equality cannot occur since $c_j$ is the only term in the equation involving $a_j$, a contradiction.

\noindent
\textbf{Case 4.} 
$x\in\text{type I} \ K_4$ and $y\in\text{type III} \ K_4$.

Then $x\in K_4\left(c_i\right)$ where
$i\in[M]$ and $c_i\in \left\lbrace a_i, b_i\right\rbrace$, and 
$y\in K_4\left(\frac{N}{2}\right)$.
Thus
$r'_x=\frac{c_i}{2}$ and $r'_y=\frac{N}{4}$.
The equation $r'_x+r'_y=e_t$ becomes 
\begin{align*}
	c_i-2e_t +\frac{N}{2}& = 0
\end{align*}
By Lemma~\ref{sumeven}, the sum of the coefficients of $c_i$ and $e_t$ must be even, a contradiction.

\noindent
\textbf{Case 5.} 
$x\in\text{type I} \ K_4$ and $y\in\text{type IV} \ K_4$.

Then $x\in K_4\left(c_i\right)$ where
$i\in[M]$ and $c_i\in \left\lbrace a_i, b_i\right\rbrace$, and 
$y\in K_4\left(a_p, b_p, a_q, b_q, \frac{N}{2}, \frac{N}{2}\right)$
where  $p, q \in[M]$ are distinct.
Thus $r'_x=\frac{c_i}{2}$ and $r'_y=\frac{d_p+d_q-N/2}{2}$ where $d_\ell\in \left\lbrace a_\ell, b_\ell\right\rbrace$ for $\ell\in\left\lbrace p, s \right\rbrace$.
The equation $r'_x+r'_y=e_t$ becomes  
\begin{align*}
	c_i+d_p+d_q-2e_t-\frac{N}{2} = 0 
\end{align*}
By Lemma~\ref{sumeven},  
the sum of the coefficients of $c_i$, $d_p$, $d_q$ and $e_t$ must be even, a contradiction.

\noindent
\textbf{Case 6.}
$x\in\text{type II} \ K_4$ and $y\in\text{type III} \ K_4$.

Then $x$ is in either $K_4\left(a_i, b_i, a_j, b_j, a_k, b_k\right)$ or $K_4\left(a_i, b_i, a_j, b_j, b_k, a_k\right)$
where $ i, j, k \in[M]$ are all distinct, and
$y\in K_4\left(\frac{N}{2}\right)$.
Thus $r'_x=\frac{c_i+c_j-c_k}{2}$ where 
$c_\ell\in \left\lbrace a_\ell, b_\ell\right\rbrace$ for $\ell\in\left\lbrace i, j, k  \right\rbrace$, and 
$r'_y=\frac{N}{4}$.
The equation $r'_x+r'_y=e_t$ becomes 
\begin{align*}
	c_i+c_j-c_k+\frac{N}{2}-2e_t = 0
\end{align*}
By Lemma~\ref{sumeven},
the sum of the coefficients of $c_i$, $c_j$, $c_k$ and $e_t$ must be even, a contradiction.

\noindent
\textbf{Case 7.}
$x\in\text{type II} \ K_4$ and $y\in\text{type IV} \ K_4$.

Then $x$ is in either $K_4\left(a_i, b_i, a_j, b_j, a_k, b_k\right)$ or $K_4\left(a_i, b_i, a_j, b_j, b_k, a_k\right)$
where $ i, j, k \in[M]$ are all distinct, and
$y\in K_4\left(a_p, b_p, a_q, b_q, \frac{N}{2}, \frac{N}{2}\right)$
where $p, q \in[M]$ are distinct.
Thus $r'_x=\frac{c_i+c_j-c_k}{2}$ where 
$c_\ell\in \left\lbrace a_\ell, b_\ell\right\rbrace$ for $\ell\in\left\lbrace i, j, k  \right\rbrace$, and 
$r'_y=\frac{d_p+d_q-N/2}{2}$ where $d_\ell\in \left\lbrace a_\ell, b_\ell\right\rbrace$ for $\ell\in\left\lbrace p, q \right\rbrace$.
The equation $r'_x+r'_y=e_t$ becomes 
\begin{align*}
c_i+c_j-c_k+d_p+d_q-2e_t-\frac{N}{2} = 0
\end{align*}
By Lemma~\ref{sumeven}, 
the sum of the coefficients of $c_i$, $c_j$, $c_k$, $d_p$, $d_q$ and $e_t$ must be even, a contradiction.

\noindent
\textbf{Case 8.}
$x, y\in\text{type III} \ K_4$.

This case cannot occur
since $x$ and $y$ are in different $K_4$'s, but 
there is only one $K_4\left(\frac{N}{2}\right)$. 

\noindent
\textbf{Case 9.}
$x\in\text{type III} \ K_4$ and $y\in\text{type IV} \ K_4$.

Then $x\in K_4\left(\frac{N}{2}\right)$ and
$y\in K_4\left(a_i, b_i, a_j, b_j, \frac{N}{2}, \frac{N}{2}\right)$
where $i, j \in[M]$ are distinct.
Thus $r'_x=\frac{N}{4}$ and 
$r'_y=\frac{c_i+c_j-N/2}{2}$ where $c_\ell\in \left\lbrace a_\ell, b_\ell\right\rbrace$ for $\ell\in\left\lbrace i, j \right\rbrace$.
The equation $r'_x+r'_y=e_t$ becomes
\begin{align*}
	c_i+c_j=2e_t.
\end{align*}
We obtain a contradiction similar to Case 1.

\noindent
\textbf{Case 10.}
$x, y\in\text{type IV} \ K_4$.

Then 
$x\in K_4\left(a_i, b_i, a_j, b_j, \frac{N}{2}, \frac{N}{2}\right)$
where $i, j \in[M]$ are distinct, and 
$y\in K_4\left(a_p, b_p, a_q, b_q, \frac{N}{2}, \frac{N}{2}\right)$
where $p, q \in[M]$ are distinct.
Thus
$r'_x=\frac{c_i+c_j-N/2}{2}$ where $c_\ell\in \left\lbrace a_\ell, b_\ell\right\rbrace$ for $\ell\in\left\lbrace i, j \right\rbrace$, and
$r'_y=\frac{d_p+d_q-N/2}{2}$ where $d_\ell\in \left\lbrace a_\ell, b_\ell\right\rbrace$ for $\ell\in\left\lbrace p, q \right\rbrace$.
The equation $r'_x+r'_y=e_t$ becomes 
\begin{align*}
	c_i+c_j+d_p+d_q-N=2e_t.
\end{align*}
Since $x$ and $y$ are in different $K_4$'s, we have
$\left\lbrace i, j \right\rbrace \neq \left\lbrace p, q \right\rbrace$. 
Thus there is an index in one set not appearing in the other set, say 
$i\notin \left\lbrace p, q \right\rbrace$
and $p\notin\left\lbrace i, j \right\rbrace$.
One of $i$ or $p$ cannot equal to $t$, say $i\neq t$. 
Therefore, the equality cannot occur since $c_i$ is the only term in the equation involving $a_i$, a contradiction.
\end{proof}

Let $\varepsilon$ be a positive real number smaller than any distance between two distinct rank sums in the $\left(A, B, 0 \right)$-assignment of $nK_4$.
Note that the set of edge rank sums in the $\left(A, B, 0 \right)$-assignment of $nK_4$ is $A\cup B\cup \left\lbrace \frac{N}{2} \right\rbrace$.
By the definition of $\varepsilon$, the sets of the form $\left[a_i, a_i+\varepsilon\right]$, 
$\left[b_i, b_i+\varepsilon \right]$
and $\left\lbrace \frac{N}{2}+\varepsilon \right\rbrace$ for all $i\in[M]$ are pairwise disjoint.
Let $r$ be the $\left(A, B, \varepsilon \right)$-assignment of $nK_4$.
Then, for any vertex $u\in nK_4$,
\begin{equation*}
	r_u=
	\left\{
	\begin{array}{l}
		r'_u \hspace{.85cm}\emph{if} \hspace{.2cm} u \ \text{is in a type I or II}\ K_4,
		\\
		r'_u+\frac{\varepsilon}{2} \hspace{.2cm}\emph{if} \hspace{.2cm} u \ \text{is in a type III or IV}\ K_4.
	\end{array}
	\right.
\end{equation*}	
Let $xy$ be a nonedge in $nK_4$ and consider $a_i\in A$.
Observe that 
\begin{equation*}
	r_x+r_y\in\left\lbrace r'_x+r'_y,  r'_x+r'_y+\frac{\varepsilon}{2},  r'_x+r'_y+\varepsilon \right\rbrace.
\end{equation*}
\noindent
By Claim, $r'_x+r'_y\neq a_i$.
Since $a_i$ is a rank sum in the $\left(A, B, 0 \right)$-assignment, the distance between $r'_x+r'_y$ and $a_i$ exceeds $\varepsilon$ by the definition of $\varepsilon$.
If $r'_x+r'_y>a_i$, then $a_i+\varepsilon<r'_x+r'_y\leq r_x+r_y$.
If $r'_x+r'_y<a_i$, then $r_x+r_y\leq r'_x+r'_y+\varepsilon<a_i$.
Thus $r_x+r_y\notin\left[a_i, a_i+\varepsilon\right]$.
Similarly, 
$r_x+r_y\notin\left[b_i, b_i+\varepsilon \right]$.

It remains to show that $r_x+r_y\neq\frac{N}{2}+\varepsilon$.
Note that $\frac{N}{2}$ is a rank sum in the $\left(A, B, 0 \right)$-assignment.
Thus the distance between $r'_x+r'_y$ and $\frac{N}{2}$ is either $0$ or more than $\varepsilon$ by the definition of $\varepsilon$.
If $x$ or $y$ is in a type I or II $K_4$, then $r_x+r_y\in \left\lbrace r'_x+r'_y, r'_x+r'_y+\frac{\varepsilon}{2} \right\rbrace$.
If $r_x+r_y= \frac{N}{2}+\varepsilon$, then
the distance between $r'_x+r'_y$ and $\frac{N}{2}$ is either $\varepsilon$ or $\frac{\varepsilon}{2}$, a contradiction.
Thus we may suppose that both $x$ and $y$ are in a  a type III or IV $K_4$.
Since there is only one $K_4$ of type III, we may suppose further that $x$ is in a type IV $K_4$.
Then $x\in K_4\left(a_i+\varepsilon, b_i+\varepsilon, a_j+\varepsilon, b_j+\varepsilon, \frac{N}{2}+\varepsilon, \frac{N}{2}+\varepsilon\right)$
for some distinct $i, j \in[M]$.
Thus
$r_x=\frac{c_i+c_j-N/2+\varepsilon}{2}$ where $c_\ell\in \left\lbrace a_\ell, b_\ell\right\rbrace$ for $\ell\in\left\lbrace i, j \right\rbrace$.

If $y$ is in a type III $K_4$, then 
$y\in K_4\left(\frac{N}{2}+\varepsilon\right)$.
Thus $r_y=\frac{N}{4}+\frac{\varepsilon}{2}$.
Hence,
\begin{equation*}
	r_x+r_y=\left( \frac{c_i+c_j-N/2+\varepsilon}{2}\right)+\left( \frac{N}{4}+\frac{\varepsilon}{2}\right) =\frac{c_i+c_j}{2}+\varepsilon.
\end{equation*}
Suppose to the contrary that $r_x+r_y=\frac{N}{2}+\varepsilon$, that is $c_i+c_j=N$.
Since $i\neq j$, we have $c_i$ is the only term in the equation involving $a_i$.
Thus the equality cannot occur, a contradiction.

If $y$ is in a type IV $K_4$, then
$y\in K_4\left(a_p+\varepsilon, b_p+\varepsilon, a_q+\varepsilon, b_q+\varepsilon, \frac{N}{2}+\varepsilon, \frac{N}{2}+\varepsilon\right)$
for some distinct $p, q \in[M]$.
Thus
$r_y=\frac{d_p+d_q-N/2+\varepsilon}{2}$ where $d_\ell\in \left\lbrace a_\ell, b_\ell\right\rbrace$ for $\ell\in\left\lbrace p, q \right\rbrace$.
Suppose to the contrary that $r_x+r_y=\frac{N}{2}+\varepsilon$, i.e.
\begin{equation*}
c_i+c_j+d_p+d_q-2N=0.
\end{equation*}
Since $x$ and $y$ are in different $K_4$'s, we have
$\left\lbrace i, j \right\rbrace\neq\left\lbrace p, q \right\rbrace$. 
Thus there exists an index in one set not appearing in the other set, say $i\notin\left\lbrace p, q \right\rbrace$.
Recall that $i\neq j$.
Hence, the equality cannot occur since $c_i$ is the only term in the equation involving $a_i$, a contradiction.
\end{proof}
Now, we are ready to prove Theorem~\ref{Main theorem K4}.
Its proof follows the same line of argument as in the proof of Theorem~\ref{Main theorem K3}, nevertheless,
that of Theorem~\ref{Main theorem K4} is significantly more complicated.  

\begin{proof} [Proof of Theorem~\ref{Main theorem K4}$(i)$]
Let $m$ be such that $t_{m-1} \leq n < t_m$.
Suppose to the contrary that $\Theta(nK_4)\leq 2m-2$. 
By Lemma~\ref{lemK4}$(i)$, 
\begin{equation*}
	\begin{split}
		n & \leq \left\lceil{\frac{\Theta(nK_4)}{2}}\right\rceil+\binom{\left\lfloor\frac{\Theta(nK_4)+1}{4}\right\rfloor}{3}+\binom{\left\lceil\frac{\Theta(nK_4)}{4}\right\rceil}{3} \\
		& \leq \left\lceil{\frac{2m-2}{2}}\right\rceil+\binom{\left\lfloor\frac{2m-2+1}{4}\right\rfloor}{3}+\binom{\left\lceil\frac{2m-2}{4}\right\rceil}{3} \\
		& = (m-1)+\binom{\left\lfloor\frac{m-1}{2}\right\rfloor}{3}+\binom{\left\lceil\frac{m-1}{2}\right\rceil}{3} \\
		& =
		t_{m-1}-1
	\end{split}
\end{equation*}
contradicting the definition of $m$.
Hence, $\Theta(nK_4)\geq2m-1$. 

To prove that $\Theta(nK_4)\leq2m$, 
let $\left\lbrace N, a_1, a_2,\dots,a_{\left\lfloor{m/2}\right\rfloor}\right\rbrace\subset \mathbb{R^+}$ be a linearly independent set over $\mathbb{Q}$ such that $a_i<N$ for all $i\in \left\lbrace 1, 2,\dots, \left\lfloor{\frac{m}{2}}\right\rfloor\right\rbrace$
and 
let $b_i=N-a_i$ for $i= 1, 2,\dots, \left\lfloor{\frac{m}{2}}\right\rfloor$. 
Write $A=\left\lbrace a_1, a_2,\dots, a_{\left\lfloor{m/2}\right\rfloor}\right\rbrace$ and $B=\left\lbrace  b_1, b_2,\dots, b_{\left\lfloor{m/2}\right\rfloor}\right\rbrace$.

\noindent
\textbf{Case 1.} $m$ is even. 	

Let $n'=t_{m}-1=m+2\binom{m/2}{3}$.
It is sufficient to show that $\Theta(n'K_4)\leq2m$ since $\Theta(nK_4)\leq \Theta(n'K_4)$ as $nK_4$ is an induced subgraph of $n'K_4$.
Consider the $\left(A, B\right)$-assignment of $n'K_4$.
By Lemma~\ref{EnotNEK4}$(i)$, the edge and nonedge rank sums do not coincide. 
Note that the set of edge rank sums of $n'K_4$ is $A\cup B$. Let $A\cup B=\left\lbrace c_1, c_2,\dots, c_m \right\rbrace$.
We separate the edge and nonedge rank sums by putting two thresholds around each edge rank sum.
For $i=1, 2,\dots, m$, let $\theta_{2i-1}=c_i$ and 
$\theta_{2i}=c_i+\varepsilon'$ be thresholds of $n'K_4$ where $\varepsilon'$ is a sufficiently small positive real number, for example, take $\varepsilon'$ smaller than any distance between two distinct rank sums of $n'K_4$.
Thus the above rank assignment is a $\left(\theta_{1}, \theta_{2}, \dots, \theta_{2m}\right)$-representation of $n'K_4$, and
hence $n'K_4$ is a $2m$-threshold graph, 
that is $\Theta(n'K_4)\leq2m$.

\noindent
\textbf{Case 2.} $m$ is odd. 

Let $n'=t_{m}-1=m+\binom{\left\lfloor{m/2}\right\rfloor}{3}+\binom{\left\lceil{m/2}\right\rceil}{3}$.
It is sufficient to show that $\Theta(n'K_4)\leq2m$ since $nK_4$ is an induced subgraph of $n'K_4$.
By Lemma~\ref{EnotNEK4}$(ii)$, 
there is a positive real number $\varepsilon$ such that, in the $\left(A, B, \varepsilon\right)$-assignment of $n'K_4$, 
no nonedge rank sum lies in either  $\left[a_i, a_i+\varepsilon\right]$, 
$\left[b_i, b_i+\varepsilon \right]$
or $\left\lbrace \frac{N}{2}+\varepsilon \right\rbrace$ for all $i\in \left\lbrace 1, 2,\dots, \left\lfloor{\frac{m}{2}}\right\rfloor\right\rbrace$, and 
moreover, the sets of the form $\left[a_i, a_i+\varepsilon\right]$, 
$\left[b_i, b_i+\varepsilon\right]$
and $\left\lbrace \frac{N}{2}+\varepsilon \right\rbrace$ for all $i\in \left\lbrace 1, 2,\dots, \left\lfloor{\frac{m}{2}}\right\rfloor\right\rbrace$ are pairwise disjoint.
Let $A\cup B\cup\left\lbrace \frac{N}{2}+\varepsilon \right\rbrace=\left\lbrace c_1, c_2,\dots, c_m \right\rbrace$.
We separate the edge and nonedge rank sums by putting two thresholds around each interval of edge rank sums of the form $\left[c_i, c_i+\varepsilon\right]$ and $\left\lbrace \frac{N}{2}+\varepsilon \right\rbrace$.
For $i=1, 2, \dots, m$, let $\theta_{2i-1}=c_i$ and
\begin{equation*}
	\theta_{2i}=
	\left\{
	\begin{array}{l}
		c_i+\varepsilon+\varepsilon' \hspace{.2cm}\text{if} \hspace{.2cm} c_i\in A\cup B,
		\vspace{.25cm}\\
		c_i+\varepsilon' \hspace{.8cm}\text{if} \hspace{.2cm} 
		c_i=\frac{N}{2}+\varepsilon
	\end{array}
	\right.
\end{equation*}	
be thresholds of $n'K_4$ where  $\varepsilon'$ is a sufficiently small positive real number.
Thus the above rank assignment is a $\left(\theta_1, \theta_2,\dots, \theta_{2m} \right)$-representation of $n'K_4$, and
hence $n'K_4$ is a $2m$-threshold graph, that is $\Theta(n'K_4)\leq2m$.

Suppose that $n=t_{m-1}$.
To prove that $\Theta(nK_4)\leq2m-1$, 
we write $M=\left\lfloor{\frac{m+1}{2}}\right\rfloor$ and 
let $\left\lbrace N, a_1, a_2,\dots,a_M\right\rbrace\subset \mathbb{R^+}$ be a linearly independent set over $\mathbb{Q}$ such that
$a_i<N\leq\frac{a_M}{2}$ for all $i\in[M-1]$.
Let 
$b_i=N-a_i$ for $i=1, 2,\dots, M-1$. 
Write $A=\left\lbrace a_1, a_2,\dots,a_{M-1}\right\rbrace$ and 
$B=\left\lbrace b_1, b_2,\dots,b_{M-1}\right\rbrace$.

\noindent
\textbf{Case 1.} $m-1$ is even. 

We take the $\left(A, B\right)$-assignment for the first $(m-1)+2\binom{(m-1)/2}{3}$ $K_4$'s in $nK_4$, and let every edge in the last $K_4$ have edge rank sum $a_M$.
Note that these $K_4$'s appear in the $\left(A\cup \left\lbrace a_M \right\rbrace, B\cup \left\lbrace b_M \right\rbrace\right)$-assignment of $\left(t_{m+1}-1\right)K_4$.
By Lemma~\ref{EnotNEK4}$(i)$, the edge and nonedge rank sums do not coincide.
Observe that the set of edge rank sums of $nK_4$ is $A\cup B\cup \left\lbrace a_M \right\rbrace$.
Let $A\cup B\cup \left\lbrace a_M \right\rbrace=\left\lbrace c_1, c_2,\dots, c_m \right\rbrace$
where $c_1<c_2<\dots<c_m$.
We separate the edge and nonedge rank sums by putting two thresholds around each edge rank sum.
For $i=1, 2,\dots, m$, let
$\theta_{2i-1}=c_i$ and 
$\theta_{2i}=c_i+\varepsilon'$ be thresholds of $nK_4$ where $\varepsilon'$ is a sufficiently small positive real number.
Thus the above rank assignment is a $\left(\theta_{1}, \theta_{2}, \dots, \theta_{2m}\right)$-representation of $nK_4$. 
In fact, we will show that we do not need the last threshold $\theta_{2m}$ by proving that no rank sum exceeds $\theta_{2m-1}$.
It is sufficient to show that the rank of each vertex is at most $\frac{\theta_{2m-1}}{2}=\frac{c_m}{2}=\frac{a_M}{2}$.
This is clear for the last $K_4$ with the set of edge rank sums $\left\lbrace a_M\right\rbrace$.
For the other $K_4$'s, the rank of each vertex is of the form  $\frac{c_i+c_j-c_k}{2}$ for some 
$i, j, k \in [m-1]$, 
which is at most $\frac{a_M}{2}$ since $c_i, c_j\leq\frac{a_M}{2}$ and $c_k>0$.
Thus the above rank assignment is a $\left(\theta_1, \theta_2,\dots, \theta_{2m-1} \right)$-representation of $nK_4$, and
hence $nK_4$ is a $(2m-1)$-threshold graph, that is $\Theta(nK_4)\leq2m-1$.

\noindent
\textbf{Case 2.} $m-1$ is odd. 

We choose $\varepsilon$ such that the $\left(A\cup \left\lbrace a_M \right\rbrace, B\cup \left\lbrace b_M \right\rbrace, \varepsilon\right)$-assignment of $\left(t_{m+1}-1\right)K_4$
satisfies the properties in Lemma~\ref{EnotNEK4}$(ii)$.
We then take the $\left(A, B, \varepsilon\right)$-assignment for the first $(m-1)+\binom{\left\lfloor(m-1)/2\right\rfloor}{3}+\binom{\left\lceil(m-1)/2\right\rceil}{3}$ $K_4$'s in $nK_4$, and let every edge in the last $K_4$ have edge rank sum $a_M$.
Note that these $K_4$'s appear in the $\left(A\cup \left\lbrace a_M \right\rbrace, B\cup \left\lbrace b_M \right\rbrace, \varepsilon\right)$-assignment of $\left(t_{m+1}-1\right)K_4$.
By the choice of $\varepsilon$, 
no nonedge rank sum lies in either  $\left[a_i, a_i+\varepsilon\right]$, 
$\left[b_i, b_i+\varepsilon \right]$
or $\left\lbrace \frac{N}{2}+\varepsilon \right\rbrace$ for all $i\in[M-1]$, and moreover, the sets of the form $\left[a_i, a_i+\varepsilon\right]$, 
$\left[b_i, b_i+\varepsilon\right]$
and $\left\lbrace \frac{N}{2}+\varepsilon \right\rbrace$ for all $i\in[M-1]$ are pairwise disjoint.
Let $A\cup B\cup\left\lbrace a_M,  \frac{N}{2}+\varepsilon \right\rbrace=\left\lbrace c_1, c_2,\dots, c_m \right\rbrace$ where $c_1<c_2<\dots<c_m$.
We claim that $c_m=a_M$.
Indeed, it is clear that $a_M>a_i, b_i$ for all $i\in [M-1]$.
Since $\frac{N}{2}+\varepsilon$ lies between the intervals $\left[a_1, a_1+\varepsilon\right]$ and  
$\left[b_1, b_1+\varepsilon \right]$ by the choice of $\varepsilon$, 
we have $\frac{N}{2}+\varepsilon<\max\left\lbrace a_1, b_1 \right\rbrace<a_M$.
We separate the edge and nonedge rank sums by putting two thresholds around each interval of edge rank sums.
For $i=1, 2, \dots, m$, let $\theta_{2i-1}=c_i$ and
\begin{equation*}
	\theta_{2i}=
	\left\{
	\begin{array}{l}
		c_i+\varepsilon+\varepsilon' \hspace{.2cm}\text{if} \hspace{.2cm} c_i\in A\cup B\cup\left\lbrace a_M \right\rbrace,
		\vspace{.25cm}\\
		c_i+\varepsilon' \hspace{.8cm}\text{if} \hspace{.2cm} 
		c_i=\frac{N}{2}+\varepsilon
	\end{array}
	\right.
\end{equation*}	
be thresholds of $nK_4$ where   $\varepsilon'$ is a sufficiently small positive real number.
Thus the above rank assignment is a $\left(\theta_1, \theta_2,\dots, \theta_{2m} \right)$-representation of $nK_4$.
In fact, we will show that we do not need the last threshold $\theta_{2m}$ by proving that no rank sum is greater than or equal to $\theta_{2m}=a_M+\varepsilon+\varepsilon'$.
It is sufficient to show that the rank of each vertex is at most $\frac{a_M+\varepsilon}{2}$.
This is clear for the last $K_4$ with the set of edge rank sums $\left\lbrace a_M \right\rbrace$.
For the other $K_4$'s, the rank of each vertex is of the form 
$\frac{d_i+d_j-d_k}{2},
\frac{N}{4}+\frac{\varepsilon}{2}$ 
or 
$\frac{d_i+d_j-N/2+\varepsilon}{2}$ where 
$i, j, k \in [M-1]$ and
$d_\ell\in \left\lbrace a_\ell, b_\ell \right\rbrace$
for $\ell\in\left\lbrace i, j, k \right\rbrace$, which is at most $\frac{a_M+\varepsilon}{2}$ 
since $0<d_i, d_j, d_k, \frac{N}{2}\leq\frac{a_M}{2}$.
Thus the above rank assignment is a $\left(\theta_1, \theta_2,\dots, \theta_{2m-1} \right)$-representation of $nK_4$, and hence
$nK_4$ is a $(2m-1)$-threshold graph, that is $\Theta(nK_4)\leq2m-1$.

Suppose that $n>t_{m-1}$.
To prove that $\Theta(nK_4)\geq2m$, we suppose that $\Theta(nK_4)\leq2m-1$.
Let $r$ be a $\left(\theta_1, \theta_2,\dots, \theta_{2m-1} \right)$-representation of $nK_4$.
Then there are at most $m$ colors of edges in $nK_4$.
By Lemma~\ref{lemK4}$(i)$, there are at most $t_{m-1}-1$ $K_4$'s without color $m$.  
By Lemma~\ref{nocolorK4}$(i)$, an edge of color $m$ appears in at most one $K_4$.
Thus $n\leq \left(t_{m-1}-1\right)+1$, a contradiction.
Therefore, $\Theta(nK_4)\geq2m$.
\end{proof}


\begin{proof} [Proof of Theorem~\ref{Main theorem K4}$(ii)$]
Let $m$ be such that $s_{m-1} \leq n < s_m$.
By Theorem~\ref{Main theorem K4}$(i)$, 
$\Theta(nK_4)\in \left\lbrace 2m,  2m+1\right\rbrace$, and hence
$\Theta(K_{n\times4})\in \left\lbrace 2m,  2m+1\right\rbrace$ by Proposition~\ref{complement}.

Suppose that $n=s_{m-1}$.
To prove that $\Theta(K_{n\times4})\leq2m$, 
we write $M=\left\lfloor{\frac{m+1}{2}}\right\rfloor$ and
let $\left\lbrace N, a_1, a_2,\dots,a_M\right\rbrace\subset \mathbb{R}$ be a linearly independent set over $\mathbb{Q}$ such that
$\frac{a_M}{3}\leq -N < -a_i < 0$ for all $i\in[M-1]$.  
Let $b_i=N-a_i$ for $i= 1, 2, \dots, M$.
Then
$\frac{a_M}{3}\leq a_i, b_i, -N, N\leq\frac{b_M}{3}$ 
for all $i\in[M-1]$.
Write 
$A=\left\lbrace a_1, a_2,\dots,a_{M-1}\right\rbrace$ and 
$B=\left\lbrace b_1, b_2,\dots,b_{M-1}\right\rbrace$.

\noindent
\textbf{Case 1.} $m-1$ is even. 

We take the $\left(A, B\right)$-assignment for the first $(m-1)+2\binom{(m-1)/2}{3}$ parts in $K_{n\times4}$, and 
let the last two parts have the sets of nonedge rank sums $\left\lbrace a_M\right\rbrace$ and $\left\lbrace b_M\right\rbrace$.
Note that these parts appear in the $\left(A\cup\left\lbrace a_M \right\rbrace, B\cup\left\lbrace b_M \right\rbrace\right)$-assignment of $K_{(s_{m+1}-2)\times4}$.
By Lemma~\ref{EnotNEK4}$(iii)$, the edge and nonedge rank sums do not coincide.
Observe that the set of nonedge rank sums of $K_{n\times4}$ is $A\cup B\cup\left\lbrace a_M, b_M \right\rbrace$.
Let $A\cup B\cup\left\lbrace a_M, b_M \right\rbrace=\left\lbrace c_1, c_2, \dots, c_{m+1} \right\rbrace$ where $c_1<c_2<\dots<c_{m+1}$.
Let $\theta_1$ be smaller than all rank sums.
We then separate the edge and nonedge rank sums by putting two thresholds around each nonedge rank sum.
For $i=1, 2, \dots, m+1$, let $\theta_{2i}=c_i$ and $\theta_{2i+1}=c_i+\varepsilon'$ where $\varepsilon'$ is a sufficiently small positive real number. 
Thus the above rank assignment is a $\left(\theta_1, \theta_2,\dots, \theta_{2m+3} \right)$-representation of $K_{n\times4}$.
In fact, we will show that we do not need the thresholds $\theta_1, \theta_2$ and $\theta_{2m+3}$ by proving that no rank sum is smaller than $\theta_2$ or larger than $\theta_{2m+2}$.
It is sufficient to show that the rank of each vertex is at least $\frac{\theta_{2}}{2}=\frac{c_1}{2}=\frac{a_M}{2}$ and
at most $\frac{\theta_{2m+2}}{2}=\frac{c_{m+1}}{2}=\frac{b_M}{2}$.
This is clear for the last two parts with the sets of nonedge rank sums $\left\lbrace a_M \right\rbrace$ and $\left\lbrace b_M \right\rbrace$.
For the other parts, 
the rank of each vertex is of the form $\frac{c_i+c_j-c_k}{2}$ for some 
$i, j, k \in [m]\setminus \left\lbrace 1 \right\rbrace$, 
which is at least $\frac{a_M}{2}$ and 
at most $\frac{b_M}{2}$ since  $\frac{a_M}{3}\leq c_i, c_j, -c_k\leq\frac{b_M}{3}$.
Thus the above rank assignment is a $\left(\theta_3, \theta_4,\dots, \theta_{2m+2} \right)$-representation of $K_{n\times4}$, and 
hence $K_{n\times4}$ is a $2m$-threshold graph, that is $\Theta(K_{n\times4})\leq2m$.

\noindent
\textbf{Case 2.} $m-1$ is odd. 

We choose $\varepsilon$ such that the $\left(A\cup \left\lbrace a_M \right\rbrace, B\cup \left\lbrace b_M \right\rbrace, \varepsilon\right)$-assignment of $K_{(s_{m+1}-2)\times4}$
satisfies the properties in Lemma~\ref{EnotNEK4}$(iv)$.
We then take the $\left(A, B, \varepsilon\right)$-assignment for the first $(m-1)+\binom{\left\lfloor(m-1)/2\right\rfloor}{3}+\binom{\left\lceil(m-1)/2\right\rceil}{3}$ parts in $K_{n\times4}$, and let  the last two parts have the sets of nonedge rank sums $\left\lbrace a_M \right\rbrace$ and $\left\lbrace b_M \right\rbrace$.
Note that these parts appear in the $\left(A\cup \left\lbrace a_M \right\rbrace, B\cup \left\lbrace b_M \right\rbrace, \varepsilon\right)$-assignment of $K_{(s_{m+1}-2)\times4}$.
By the choice of $\varepsilon$, 
no edge rank sum lies in either  $\left[a_i, a_i+\varepsilon\right]$, 
$\left[b_i, b_i+\varepsilon \right]$
or $\left\lbrace \frac{N}{2}+\varepsilon \right\rbrace$ for all $i\in [M-1]$, and moreover, the sets of the form $\left[a_i, a_i+\varepsilon\right]$, 
$\left[b_i, b_i+\varepsilon\right]$
and $\left\lbrace \frac{N}{2}+\varepsilon \right\rbrace$ for all $i\in [M-1]$ are pairwise disjoint.
Let $A\cup B\cup\left\lbrace a_M, b_M,  \frac{N}{2}+\varepsilon \right\rbrace=\left\lbrace c_1, c_2, \dots, c_{m+1} \right\rbrace$ where $c_1<c_2<\dots<c_{m+1}$.
We claim that $c_1=a_M$ and $c_{m+1}=b_M$.
Indeed, it is clear that 
$a_M<a_i, b_i<b_M$ for all $i\in [M-1]$.
Since $\frac{N}{2}+\varepsilon$ lies between the intervals $\left[a_1, a_1+\varepsilon\right]$ and  
$\left[b_1, b_1+\varepsilon \right]$ by the choice of $\varepsilon$, 
we have $a_M<\min\left\lbrace a_1, b_1 \right\rbrace<\frac{N}{2}+\varepsilon<\max\left\lbrace a_1, b_1 \right\rbrace<b_M$.
Let $\theta_1$ be smaller than all rank sums.
We then separate the edge and nonedge rank sums by putting two thresholds around each interval of nonedge rank sums.
For $i=1, 2, \dots, m+1$, let $\theta_{2i}=c_i$ and
\begin{equation*}
	\theta_{2i+1}=
	\left\{
	\begin{array}{l}
		c_i+\varepsilon+\varepsilon' \hspace{.2cm}\text{if} \hspace{.2cm} c_i\in A\cup B\cup\left\lbrace a_M, b_M \right\rbrace,
		\vspace{.25cm}\\
		c_i+\varepsilon' \hspace{.8cm}\text{if} \hspace{.2cm} 
		c_i=\frac{N}{2}+\varepsilon
	\end{array}
	\right.
\end{equation*}	
be thresholds of $K_{n\times4}$ 
where $\varepsilon'$ is a sufficiently small positive real number.
Thus the above rank assignment is a $\left(\theta_1, \theta_2,\dots, \theta_{2m+3} \right)$-representation of $K_{n\times4}$.
In fact, we will show that we do not need the thresholds $\theta_1, \theta_2$ and $\theta_{2m+3}$ by proving that no rank sum is smaller than $\theta_2$, or larger than or equal to $\theta_{2m+3}$.
It is sufficient to show that the rank of each vertex is at least $\frac{\theta_2}{2}=\frac{c_1}{2}=\frac{a_M}{2}$ and
at most $\frac{\theta_{2m+3}-\varepsilon'}{2}=\frac{c_{m+1}+\varepsilon}{2}=\frac{b_M+\varepsilon}{2}$.
This is clear for the last two parts with the sets of nonedge rank sums $\left\lbrace a_M \right\rbrace$ and $\left\lbrace b_M \right\rbrace$.
For the other parts, the rank of each vertex is of the form $\frac{d_i+d_j-d_k}{2}, \frac{N}{4}+\frac{\varepsilon}{2}$ or $\frac{d_i+d_j-N/2+\varepsilon}{2}$ where
$i, j, k \in [M-1]$ are all distinct and 
$d_\ell\in \left\lbrace a_\ell, b_\ell \right\rbrace$ for $\ell\in \left\lbrace i, j, k \right\rbrace$, which is at least $\frac{a_M}{2}$ and at most $\frac{b_M+\varepsilon}{2}$ since  $\frac{a_M}{3}\leq d_i, d_j, -d_k, -N, N\leq\frac{b_M}{3}$.
Thus the above rank assignment is a $\left(\theta_3, \theta_4,\dots, \theta_{2m+2} \right)$-representation of $K_{n\times4}$, and
hence, $K_{n\times4}$ is a $2m$-threshold graph, that is $\Theta(K_{n\times4})\leq2m$.

Suppose that $n>s_{m-1}$.	
To prove that $\Theta(K_{n\times4})\geq2m+1$, we suppose that $\Theta(K_{n\times4})\leq2m$.
Let $r$ be a $\left(\theta_1, \theta_2,\dots, \theta_{2m} \right)$-representation of $K_{n\times4}$.
Then there are at most $m+1$ colors of nonedges in $K_{n\times4}$.
By Lemma~\ref{lemK4}$(ii)$, 
there are at most $s_{m-1}-2$ parts without colors $1$ and $m+1$.
By Lemma~\ref{nocolorK4}$(ii)$ and~\ref{nocolorK4}$(iii)$, a nonedge of color $1$ appears in at most one part and a nonedge of color $m+1$ also appears in at most one part.
Therefore, $n\leq (s_{m-1}-2)+1+1$, a contradiction.
\end{proof}
 
 
\section{Concluding Remarks} \label{section-conclusion}
We find the threshold numbers of $K_{n\times3}$ and $K_{n\times4}$, while
Chen and Hao~\cite{CH} determined that of $K_{m_1, m_2, \dots, m_n}$ for $m_i>n\geq 2$.
Problem~\ref{prob} remains unsolved for other complete multipartite graphs.
The following could be the next goal.
\begin{prob}
	Determine the exact threshold number of $K_{n\times m}$ for $m\geq 5$.
\end{prob}
\noindent
The method we used can be generalized to give some bounds for $\Theta(K_{n\times m})$, but new ideas seem to be required in order to find the exact value.
	
%


\end{document}